\theoremstyle{plain}
\newtheorem{theorem}{Theorem}[section]
\newtheorem{proposition}[theorem]{Proposition}
\theoremstyle{definition}
\newtheorem{definition}[theorem]{Definition}
\newtheorem{notation}[theorem]{Notation}
\theoremstyle{remark}
\newtheorem{example}[theorem]{Example}
\newtheorem{remark}[theorem]{Remark}
\crefname{axiom}{axiom}{axioms}
\crefname{example}{example}{examples}
\crefname{formula}{formula}{formulae}
\NewDocumentCommand\CatStyle{ m }{\mathcal{#1}}
\NewDocumentCommand\IntCatStyle{ m }{\mathbf{#1}}
\NewDocumentCommand{\Cat}		{ O{}O{} }	{\ensuremath{{#2}\mathrm{Cat}_{#1}}}
\NewDocumentCommand{\ICat}		{ O{}O{} }	{\ensuremath{{#2}\mathrm{ICat}_{#1}}}
\NewDocumentCommand{\MonCat}	{ O{} }		{\ensuremath{\mathrm{Mon}\Cat[#1]}}
\NewDocumentCommand{\SymMonCat}	{ O{} }		{\ensuremath{\mathrm{Sym}\MonCat[#1]}}
\NewDocumentCommand{\Multicat}	{ O{} }		{\ensuremath{\mathrm{Multicat}_{#1}}}
\NewDocumentCommand{\Graph}		{ O{} }		{\ensuremath{\mathrm{Graph}_{#1}}}
\NewDocumentCommand{\Cns}		{ m }		{\ensuremath{\mathrm{Cone}_{#1}}}
\NewDocumentCommand{\CoCns}		{ m }		{\ensuremath{\mathrm{CoCone}_{#1}}}
\NewDocumentCommand{\SetCat}{}				{\mathrm{Set}}
\NewDocumentCommand\Id			{ O{} }		{\mathrm{id}_{#1}}
\NewDocumentCommand\Source		{ O{} }		{\mathrm{s}_{#1}}
\NewDocumentCommand\Target		{ O{} }		{\mathrm{t}_{#1}}
\NewDocumentCommand\Hom			{ O{} } 	{#1}%
\NewDocumentCommand\Comp		{ O{} }		{\mathbin{\circ_{#1}}}
\NewDocumentCommand\VComp		{ O{} }		{\Comp[#1]}
\NewDocumentCommand\HComp		{ O{} }		{\mathbin{\ast_{#1}}}
\NewDocumentCommand\MonProd		{O{}O{}}	{\mathbin{\otimes^{#1}_{#2}}}
\NewDocumentCommand\MonUnit		{ O{} }		{\mathbb{I}_{#1}}
\NewDocumentCommand\MonAssoc	{ O{} }		{\alpha_{#1}}
\NewDocumentCommand\MonUnitorL	{ O{} }		{\lambda_{#1}}
\NewDocumentCommand\MonUnitorR	{ O{} }		{\rho_{#1}}
\NewDocumentCommand\MonSym		{ O{} }		{\sigma_{#1}}
\NewDocumentCommand\Prod		{ O{} } 	{\mathbin{\times^{#1}}}
\NewDocumentCommand\Terminal	{ O{} } 	{\mathbb{1}_{#1}}
\NewDocumentCommand\Initial		{ O{} } 	{\mathbb{0}_{#1}}
\NewDocumentCommand\Pullback 	{O{}O{}}	{\mathbin{\prescript{}{#1}{\Prod}_{#2}}}
\NewDocumentCommand\Diag		{ O{} } 	{\delta_{#1}}
\NewDocumentCommand\Aug			{ O{} } 	{\mathrm{e}_{#1}}
\NewDocumentCommand\Inv			{ O{} } 	{\mathrm{i}_{#1}}
\NewDocumentCommand\Adjoint		{}			{\dashv}
\NewDocumentCommand\Op			{}			{\mathbf{op}}
\DeclareMathOperator\Ind					{ind} %
\DeclareMathOperator\Dis					{dis} %
\NewDocumentCommand{\TiC}	{}{\vdash}
\DeclarePairedDelimiter{\ExternalizationBrk}{\lbrack}{\rbrack}
\NewDocumentCommand{\Externalization} {m O{}} {\ExternalizationBrk{#1}^{#2}}
\NewDocumentCommand{\intCat}	{O{}}	{\ensuremath{\Cat_{#1}}}
\NewDocumentCommand\Fun			{mm}	{\mathrm{Fun}(#1, #2)}
\NewDocumentCommand\Nat			{mm}	{\mathrm{Nat}(#1, #2)}
\NewDocumentCommand\EndDiag		{O{}}	{\mathrm{End}(#1)}
\NewDocumentCommand\DoubleCat	{}		{\mathbb{D}}
\NewDocumentCommand\Parallel	{m}		{#1_{\mathrm{p}}}
\NewDocumentCommand\ParallelCat	{}		{\mathbb{P}}
\NewDocumentCommand{\idMon}		{ O{} } {\mathrm{id}_{#1}} %
\DeclarePairedDelimiter{\Name}{\ulcorner}{\urcorner}
\DeclarePairedDelimiterXPP{\nStEq}[3]{}{\lbrack}{\rbrack}{}{#1 =_{#3} #2}
\DeclarePairedDelimiterXPP{\nStIn}[2]{}{\lbrack}{\rbrack}{}{#1 \in #2}
\DeclareMathOperator{\Lim}			{lim}
\NewDocumentCommand{\Eval}			{O{}}	{\mathrm{e}}%
\DeclarePairedDelimiterX{\PowObj}[2]{\lbrack}{\rbrack}{#1, #2}
\DeclarePairedDelimiterX{\WLim}[2]{\lbrace}{\rbrace}{#2; #1}
\DeclarePairedDelimiterX{\pairEnc}[2]{\langle}{\rangle}{#1, #2}
\DeclarePairedDelimiterX{\PSh}[2]{\lbrack}{\rbrack}{#1, #2}
\NewDocumentCommand \Def			{ m }	{\emph{#1}}
\NewDocumentCommand \Iso			{ }		{\mathrel{\cong}}
\NewDocumentCommand \Entry			{ O{} } {\mathhyphen_{#1}}
\NewDocumentCommand{\DefEq} {} {\coloneq}%
\DeclarePairedDelimiterX{\pair}[2]{\lparen}{\rparen}{#1, #2}
\NewDocumentCommand{\mathhyphen}	{}{\text{-}}
\begin{document}
	
\title{Complete Internal Categories}
\author{Enrico Ghiorzi\thanks{Research funded by Cambridge Trust and EPSRC.}}
\date{April 2020}
	
\maketitle
	
\abstract{
Internal categories feature notions of limit and completeness, as originally proposed in the context of the effective topos.
This paper sets out the theory of internal completeness in a general context, spelling out the details of the definitions of limit and completeness and clarifying some subtleties.
Remarkably, complete internal categories are also cocomplete and feature a suitable version of the adjoint functor theorem.
Such results are understood as consequences of the intrinsic smallness of internal categories.
}
	
\tableofcontents
	
\NewDocumentCommand{\E} {} {\CatStyle{E}}

\addsec{Introduction} {

It is a remarkable feature of the effective topos \autocite{hyland1982effective} to contain a small subcategory, the category of modest sets, which is in some sense complete.
That such a category should exist was originally suggested by Eugenio Moggi as a way to understand how realizability toposes give rise to models for impredicative polymorphism,
and concrete versions of such models already appeared in \cite{girard1972interpretation}.
The sense in which the category of modest sets is complete is a delicate matter which,
among other aspects of the situation, is considered in \cite{Hyland90discreteobjects}.
This notion of (strong) completeness is related in a reasonably straightforward way---using the externalization of an internal category---to the established notion for indexed categories \autocite{pare1978abstract}.
There is a rough sketch of the indexed point of view in \cite{hyland1982effective}.

Sadly, the existing literature on internal complete categories is sparse.
The paper \cite{hyland1988small}, which presents a leading example, gives a sketch of how the theory might develop.
The more or less contemporaneous paper \cite{Hyland90discreteobjects} discusses the definitions in light of a perspective suggested by Freyd, but its main focus is on weak notions of completeness.
The basic idea is that one has a weak limit when it is internally true that there exists a limit cone for the given diagram, while the limit is strong when the choice of limit cone is given as part of the structure.
In this paper, though, we are not assuming that the internal logic of our ambient category features existential quantification, so the notion of weak limit is not even applicable.

In this paper, we adopt and clarify the definition of strong completeness given in \cite{Hyland90discreteobjects}, where the use of the internal logic is intended to ensure that the property of being a limit cone is stable under pullback.
Despite making the definition of completeness very concise, such a choice has the downside of making the content less accessible.
Instead, we will present the notion in a more explicit way, by avoiding the use of the internal language in its definition.
Moreover, we shall prove some results for internal complete categories that do not generally hold for standard categories, suggesting that the absence of size issues makes internal complete categories better behaved than external ones: in particular, that completeness and cocompleteness in the internal context are equivalent, and that a suitable version of the adjoint functor theorem holds without requiring any solution set condition.
}

\section{Background}
\label{sec:background}

In this section we quickly recall, without any claim of completeness, some topics in Category Theory which will be needed as background.
Although these topics are standard and well-known, it is useful to spell them out anyway to set the notation.
Aside from that, we assume the reader to be familiar with the basic notions of Category Theory.
In this respect, we shall regard \textcite{MacLane98CategoriesWM,Borceaux94HandbookCA} as our main references.

In the context of this section, let \(\E\) be a category with finite limits, which we regard as our ambient category.
We also require \(\E\) to have a cartesian monoidal structure, that is, a monoidal structure given by a functorial choice of binary products \(\Prod \colon \E \Prod \E \to \E\) and a chosen terminal object \(\Terminal\).

Notice that, as a category with finite limits, \(\E\) is a model for cartesian logic, or finite limit logic.
So, we will frequently use its internal language to ease the notation.
The internal language will be extended to typed lambda-calculus when we will further assume \(\E\) to be locally cartesian closed.
There are multiple accounts of the internal language of categories in the literature.
In particular, we shall follow \textcite{johnstone2002sketches,crole1993categories}, but, since we only make a basic use of the internal language, other references would be equally adequate.

\subsection{Internal Categories} \label{sec:int_cat} {

\NewDocumentCommand{\A} {} {\IntCatStyle{A}}
\ProvideDocumentCommand{\B} {} {\IntCatStyle{B}}
\RenewDocumentCommand{\C} {} {\IntCatStyle{C}}
\NewDocumentCommand{\D} {} {\IntCatStyle{D}}
\NewDocumentCommand{\F} {} {\symcal{F}}
\NewDocumentCommand{\Fs} {} {\F_{\symrm{s}}}
\NewDocumentCommand{\I} {} {\CatStyle{I}}
\ProvideDocumentCommand{\M} {} {\CatStyle{M}}
\NewDocumentCommand{\V} {} {\CatStyle{V}}
\NewDocumentCommand{\X} {} {\CatStyle{X}}
\NewDocumentCommand{\Y} {} {\CatStyle{Y}}
\NewDocumentCommand{\W} {} {\CatStyle{W}}
\NewDocumentCommand{\Z} {} {\CatStyle{Z}}

We start by giving the definitions of internal category, functor and natural transformation using the internal language of \(\E\) as described before.

\begin{definition}[internal category]
	An \Def{internal category} \(\A\) in \(\E\) is a diagram
	\begin{equation*}
		\begin{tikzcd}[column sep = large]
			A_0
				\ar[r, "{\Id[\A]}" description]
			& A_1
				\ar[l, "{\Source[\A]}"', shift right=2, bend right]
				\ar[l, "{\Target[\A]}", shift left=2, bend left]
			& A_1 \Pullback[\Source][\Target] A_1
				\ar[l, "{\Comp[\A]}"']
		\end{tikzcd}
	\end{equation*}
	in \(\E\) (where \(A_1 \Pullback[\Source][\Target] A_1\) is the pullback of \(\Source[\A]\) and \(\Target[\A]\)) satisfying the usual axioms for categories, which can be expressed in the internal language of \(\E\) as follows (where, as usual, we use an infix notation for \(\Comp[\A]\)).
	\begin{align*}
		a \colon A_0 &\TiC \Source[\A] \Id[\A](a) = a \colon A_0 \\ %
		a \colon A_0 &\TiC \Target[\A] \Id[\A](a) = a \colon A_0 \\ %
		(g, f) \colon A_1 \Pullback[\Source][\Target] A_1 &\TiC \Source[\A] (g \Comp[\A] f) = \Source[\A] (f) \colon A_0 \\ %
		(g, f) \colon A_1 \Pullback[\Source][\Target] A_1 &\TiC \Target[\A] (g \Comp[\A] f) = \Target[\A] (g) \colon A_0 \\ %
		(h, g, f) \colon A_1 \Pullback[\Source][\Target] A_1 \Pullback[\Source][\Target] A_1
			&\TiC h \Comp[\A] (g \Comp[\A] f) = (h \Comp[\A] g) \Comp[\A] f \colon A_1 \\ %
		f \colon A_1 &\TiC f \Comp[\A] \Id[\A] \Source[\A](f) = f \colon A_1 \\ %
		f \colon A_1 &\TiC \Id[\A] \Target[\A](f) \Comp[\A] f = f \colon A_1 %
	\end{align*}
\end{definition}

While correct, the use we made of the internal language is quite unwieldy, especially when dealing with terms whose type is the object of arrows of an internal category.
We then introduce conventions to ease the use of the internal language, by bringing it closer to the standard notation of category theory.

\begin{notation}
	Given terms \(x \colon X \TiC t_0(x) \colon A_0\), \(x \colon X \TiC t_1(x) \colon A_0\) and \(x \colon X \TiC f(x) \colon A_1\), we shall write
	\[
		x \colon X \TiC f(x) \colon t_0(x) \to t_1(x)
		\quad \text{ or } \qquad
		x \colon X \TiC t_0(x) \xrightarrow{f(x)} t_1(x)
	\]
	instead of (the conjunction of) the formulas \(x \colon X \TiC \Source[\A]f(x) = t_0(x) \colon A_0\) and \(x \colon X \TiC \Target[\A]f(x) = t_1(x) \colon A_0\).
	
	Moreover, given terms \(x \colon X \TiC t_2(x) \colon A_0\) and \(x \colon X \TiC g(x) \colon A_1\) such that \(x \colon X \TiC g(x) \colon t_1(x) \to t_2(x)\), we shall write
	\[
		x \colon X \TiC t_0(x) \xrightarrow{f(x)} t_1(x) \xrightarrow{g(x)} t_2(x)
	\]
	instead of the term \(x \colon X \TiC g(x) \Comp[\A] f(x) \colon A_1\).
	Then, we can use the familiar notation for commuting diagrams even in the internal language.
	For example, in context \(x \colon X\), the commutativity of the diagram
	\[
		\begin{tikzcd}
			t_0(x)
					\ar[r, "f(x)"]
					\ar[d, "h(x)"']
				&t_1(x)
					\ar[d, "k(x)"] \\
			t_2(x)
					\ar[r, "g(x)"']
				&t_3(x)
		\end{tikzcd}
	\]
	can be translated into a meaningful formula of the internal language, predicating the equality of the terms given by composition.
	
	Finally, as it is common in standard categorical practice, we use the notation to set the source and target of arrows: when we have a term \(a_0, a_1 \colon A_0, f \colon A_1 \TiC t(a_0, a_1, f) \colon B\), we shall write
	\[
		f \colon t_0 \to_{\A} t_1 \TiC t(t_0, t_1, f) \colon B
	\]
	in place of the term \(f \colon A_1 \TiC t(\Source[\A](f), \Target[\A](f), f) \colon B\).
	Matching sources and targets are to be intended as being given by a pullback, so that we can write \(f \colon a_0 \to_{\A} a_1, g \colon a_1 \to_{\A} a_2\) for the context \((f, g) \colon A_1 \Pullback[\Source][\Target] A_1\).
\end{notation}

Leveraging the notation just introduced, we define functors of internal categories with relative compositions and identities, so that internal categories and their functors form a category.

\begin{definition}[internal functor]
	Let \(\A\) and \(\B\) be internal categories in \(\E\).
	A \Def{functor} of internal categories \(F \colon \A \to \B\) is given by a pair of arrows \(F_0 \colon A_0 \to B_0\) and \(F_1 \colon A_1 \to B_1\) such that
	\begin{align*}
		f \colon a_0 \to_{\A} a_1 &\TiC F_1(f) \colon F_0(a_0) \to_{\B} F_0(a_1), \\
		a \colon A_0 &\TiC F_1 \Id[\A](a) = \Id[\B] F_0(a) \colon B_1,
	\end{align*}
	and, in context \(f \colon a_0 \to_{\A} a_1, g \colon a_1 \to_{\A} a_2\), the diagram
	\[
		\begin{tikzcd}
			&F_0(a_1)
				\ar[dr, "{F_1(g)}"] \\
			F_0(a_0)
				\ar[ur, "{F_1(f)}"]
				\ar[rr, "{F_1(g \Comp[\A] f)}"']
			&&F_0(a_2)
		\end{tikzcd}
	\]
	commutes.
\end{definition}

The composition \(G F\) of two consecutive internal functors \(\A \xrightarrow{F} \B \xrightarrow{G} \C\) is given by the compositions \(G_0 F_0\) and \(G_1 F_1\); in the internal language, the definition reads
\begin{align*}
	a \colon A_0 &\TiC {(G F)}_0 (a) \DefEq G_0 F_0 (a) \colon C_0 \\
	f \colon A_1 &\TiC {(G F)}_1 (f) \DefEq G_1 F_1 (f) \colon C_1
\end{align*}
and it is associative.

The identity functor \(\Id[\A] \colon \A \to \A\) for an internal category \(\A\) is given by the identity arrows on \(A_0\) and \(A_1\); in the internal language, the definition reads
\begin{align*}
	a \colon A_0 &\TiC {(\Id[\A])}_0 (a) \DefEq a \colon A_0 \\
	f \colon A_1 &\TiC {(\Id[\A])}_1 (f) \DefEq f \colon A_1.
\end{align*}
With respect to such identity functors, the composition of internal functors satisfies the unit laws.

The following result sums up the content of the previous definitions.

\begin{proposition}
	Internal categories in \(\E\) and their functors, with their composition and identities, form a category \(\Cat[\E]\).
\end{proposition}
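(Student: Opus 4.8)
The plan is to verify the category axioms for the data already assembled: internal categories as objects, internal functors as morphisms, together with the composition \(GF\) and the identities \(\Id[\A]\) introduced above. Since both operations have been specified at the level of the underlying arrows of \(\E\), the work splits into two parts: checking that these operations stay within the class of internal functors, and checking that they satisfy associativity and the unit laws.

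First I would confirm that the composite \(GF\) of \(\A \xrightarrow{F} \B \xrightarrow{G} \C\) is again an internal functor. Its components \(G_0 F_0\) and \(G_1 F_1\) have the correct types, so only the three defining equations of an internal functor remain. The source/target condition follows by feeding the arrow \(F_1(f)\)---whose source and target are \(F_0(a_0)\) and \(F_0(a_1)\) by the condition for \(F\)---into the corresponding condition for \(G\). Preservation of identities is the chain \(G_1 F_1 \Id[\A](a) = G_1 \Id[\B] F_0(a) = \Id[\C] G_0 F_0(a)\), obtained by applying the identity law for \(F\) and then for \(G\). Preservation of composition is the pasting of the two functoriality triangles for \(F\) and \(G\) in the context \(f \colon a_0 \to_{\A} a_1,\, g \colon a_1 \to_{\A} a_2\). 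Dually, the identity functor \(\Id[\A]\), being built from identity arrows, preserves source, target, identities and composition trivially, so it too is an internal functor.

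Finally, associativity and the unit laws reduce to the corresponding facts for the arrows of \(\E\), applied componentwise: both \((HG)F\) and \(H(GF)\) have underlying arrows \(H_0 G_0 F_0\) and \(H_1 G_1 F_1\), while the unit laws \(\Id[\B] F = F = F \Id[\A]\) hold because each component \(F_i\) satisfies the unit law for composition of arrows in \(\E\). The only step that demands any care---and even then it is purely bookkeeping---is the first one, namely that composition preserves the functoriality axioms, in particular the composition-preservation triangle, where one must track the matching of domains along the pullback of composable pairs. Everything else is immediate from the fact that \(\E\) is itself a category.
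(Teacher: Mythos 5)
Your proposal is correct and matches the paper's approach: the paper explicitly declines to spell out the verification, remarking only that the proof parallels the set-theoretic one carried out in the internal language of \(\E\), and the routine componentwise checks you describe (closure of internal functors under composition and identities, then associativity and unit laws inherited from \(\E\)) are exactly that argument made explicit.
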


The previous proposition is a generalization of the standard (in the sense of external) analogous result for small categories, which indeed is the special case in which \(\E\) is \(\SetCat\).
Even the proof parallels that of the set-theoretic result, but carried out in the internal language of \(\E\).

The category of internal categories is well-behaved with respect to slicing, as the following remark makes clear.

\begin{remark}
	Let \(\E'\) be another category with finite limits, and \(F \colon \E \to \E'\) a functor preserving finite limits.
	Then, there is a functor \(F \colon \Cat[\E] \to \Cat[\E']\) (with abuse of notation) applying \(F\) to the underlying graph of internal categories.
\end{remark}

In the following remark, we notice some useful properties of \(\Cat[\E]\) in relation to slicing and change of base.

\begin{remark}
	\label{rmk:cat-reindexing}
	Let \(i \colon J \to I\) be an arrow in \(\E\).
	Then, there is an adjunction \(i_! \Adjoint i^* \colon \sfrac{\E}{I} \to \sfrac{\E}{J}\) where the functor \(i_! \colon \sfrac{\E}{J} \to \sfrac{\E}{I}\) is given by post-composition with \(i\), and the functor \(i^* \colon \sfrac{\E}{I} \to \sfrac{\E}{J}\) is given by pullback along \(i\).
	This adjunction extends to internal categories, yielding \(i_! \Adjoint i^* \colon \Cat[\sfrac{\E}{I}] \to \Cat[\sfrac{\E}{J}]\).
	In particular, the unique arrow \(!_I \colon I \to \Terminal\) yields an adjunction \(I_! \Adjoint I^* \colon \Cat[\E] \to \Cat[\sfrac{\E}{I}]\).
\end{remark}

We now define natural transformations of internal categories, with relative identities and horizontal and vertical compositions, so that internal categories, their functors and the natural transformations between them shall form a 2-category.

\begin{definition}[internal natural transformation]
	Let \(F, G \colon \A \to \B\) be functors of internal categories in \(\E\).	
	A \Def{natural transformation} of internal functors \(\alpha \colon F \to G \colon \A \to \B\) is given by an arrow \(\alpha \colon A_0 \to B_1\) such that
	\[
		a \colon A_0 \TiC \alpha_{a} \colon F_0(a) \to_{\B} G_0(a)
	\]
	and, in context \(f \colon a \to_{\A} a'\), the diagram
	\[
		\begin{tikzcd}[ampersand replacement = \&]
			F_0(a)
					\ar[r, "{\alpha_{a}}"]
					\ar[d, "{F_1(f)}"']
				\&G_0(a)
					\ar[d, "{G_1(f)}"] \\
			F_0(a')
					\ar[r, "{\alpha_{a'}}"']
				\&G_0(a')
		\end{tikzcd}
	\]
	commutes.
	Notice that the argument of the natural transformation is sub-fixed, following the standard conventions of category theory.
\end{definition}
	
We define the vertical and horizontal compositions of natural transformations in
\[
	\begin{tikzcd}[column sep = large]
		\C
			\ar[r, "L"]
		& \A
			\ar[rr, shift left, bend left = 45, "F", ""{name=F, below}]
			\ar[rr, "G" description, ""{name=Ga, above}, ""{name=Gb, below}]
			\ar[rr, shift right, bend right = 45, "H"', ""{name=H, above}]
		&& \B
			\arrow[Rightarrow, from=F, to=Ga, "\alpha"]
			\arrow[Rightarrow, from=Gb, to=H, "\beta"]
			\ar[r, "{R}"]
		& \D
	\end{tikzcd}
\]
as
\begin{align*}
	a \colon A_0 &\TiC {(\alpha \beta)}_{a} \DefEq F_0(a) \xrightarrow{\alpha_{a}} G_0(a) \xrightarrow{\beta_{a}} H_0(a) \\
	c \colon C_0 &\TiC {(\alpha L)}_{a} \DefEq F_0 L_0 (c) \xrightarrow{\alpha_{L(c)}} G_0 L_0 (c) \\
	a \colon A_0 &\TiC {(R \alpha)}_{a} \DefEq R_0 F_0 (a) \xrightarrow{R_1(\alpha_{a})} R_0 G_0 (a).
\end{align*}
Such compositions satisfy the interchange law and the vertical one is associative.

The identity natural transformation \(\Id[F] \colon F \to F \colon \A \to \B\) is defined by
\[
	a \colon A_0 \TiC ({\Id[F])}_{a} \DefEq F_0(a) \xrightarrow{\Id[\A] F_0 (a)} F_0(a).
\]
With respect to such identity, the compositions of internal natural transformations satisfy the unit laws.

The following result sums up the content of the previous definitions.

\begin{proposition}
	Internal categories in \(\E\), their functors, and natural transformations between them, with their composition and identities, form a 2-category \(\Cat[\E]\) (denoted in the same way as its underlying 1-category with abuse of notation).
\end{proposition}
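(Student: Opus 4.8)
The plan is to verify the 2-category axioms one by one, exactly as one does for ordinary small categories, but carrying out each verification in the internal language of $\E$. Since the previous proposition already establishes that internal categories and their functors form a (1-)category, what remains is to check the data and axioms governing the 2-cells: that each hom-collection $\Cat[\E](\A, \B)$ together with vertical composition and identity natural transformations forms a category, that horizontal composition is functorial, and that the whole assembles into a 2-category. Most of these facts were already asserted in the running text immediately preceding the statement (associativity and unit laws for vertical composition, the interchange law, functoriality of horizontal composition), so the proof is essentially a matter of collecting those verifications and confirming that the stated data satisfy the 2-category axioms.

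First I would confirm that the assignments given in the text actually define internal natural transformations, i.e. that the composites $\alpha\beta$, $\alpha L$ and $R\alpha$ each yield an arrow $A_0 \to B_1$ (respectively $C_0 \to B_1$) satisfying the source/target condition and the naturality square of the definition of internal natural transformation. This is a direct computation in the internal language: for the vertical composite $\alpha\beta$ one pastes the two naturality squares for $\alpha$ and $\beta$ side by side and uses functoriality of composition; for the whiskerings $\alpha L$ and $R\alpha$ one reindexes along $L_0$, respectively applies $R_1$, and invokes the functor axioms for $L$ and $R$. Next I would check that for fixed $\A, \B$ the 2-cells with vertical composition $\VComp$ and identities $\Id[F]$ form a category $\Cat[\E](\A,\B)$, i.e. associativity and the unit laws, all of which reduce to the corresponding laws in $\B$ expressed internally.

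Then I would verify the genuinely 2-dimensional axioms: that horizontal composition (defined via the whiskerings and vertical composition, in the usual way $\beta \HComp \alpha \DefEq (\beta G')\VComp(F'\alpha)$ using the data above) is associative and unital with respect to the identity 2-cells on identity functors, and finally the interchange law relating the two compositions. Each of these again unwinds to an equality of arrows into $B_1$ that holds because the analogous equality holds internally in $\B$, using naturality to commute the relevant squares. The cleanest way to organize this is to observe, as the paper's remark on limit-preserving functors suggests, that all the constructions are built from the graph data by finite-limit operations, so that the externalization functor $\E \to \SetCat$ (or more precisely the generalized-element / internal-language semantics) transports the corresponding $\SetCat$-level 2-category axioms; this lets one import the classical verification wholesale rather than rechecking each pasting diagram by hand.

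The main obstacle I expect is bookkeeping rather than conceptual difficulty: one must be careful that every composite and whiskering is well-typed, meaning the relevant sources and targets match so that the pullbacks defining composition in $\B$ are actually available, and that the naturality squares are applied to the correct arrows. In particular the interchange law requires pasting four naturality squares and reassociating, and keeping the internal-language contexts straight (which variable ranges over $A_0$, which over $C_0$, etc.) is where errors would creep in. Because none of this uses anything beyond the finite-limit structure already assumed on $\E$, I would present the proof by stating that each axiom follows from the internal translation of its set-theoretic counterpart and explicitly work through only the interchange law as the representative nontrivial case.
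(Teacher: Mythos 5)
Your proposal is correct and matches the paper's (implicit) treatment: the paper offers no separate proof, regarding the proposition as summing up the preceding definitions of vertical/horizontal composition and identities, with each 2-category axiom verified by translating the classical small-category argument into the internal language of \(\E\). Your plan of checking well-typedness of the composites and then importing the equational axioms via generalized elements is exactly this routine verification, spelled out in slightly more detail.
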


Again, the previous proposition is a generalization of the standard (in the sense of external) analogous result for small categories.

The following result is the internal version of the standard set-theoretic one, and it can be proved by a completely routine application of the internal language of \(\E\).

\begin{proposition}
	The category \(\Cat[\E]\) has finite limits induced point-wise by the corresponding limits in \(\E\).
	In particular, there is a terminal internal category \(\Terminal[\Cat[\E]]\) and a binary product \(\Prod[\Cat[\E]]\) of internal categories making \(\Cat[\E]\) a cartesian monoidal category.
\end{proposition}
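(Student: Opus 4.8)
The plan is to construct all finite limits pointwise, reproducing inside the internal language of \(\E\) the usual construction of limits of small categories in \(\SetCat\). Concretely, I would show that the forgetful functor \(\Cat[\E] \to \E \Prod \E\) sending an internal category \(\A\) to the pair \((A_0, A_1)\) of its object of objects and its object of arrows \emph{creates} finite limits; since \(\E \Prod \E\) has finite limits (as \(\E\) does), this yields finite limits in \(\Cat[\E]\) computed exactly as in the statement. So let \(D \colon \mathcal{J} \to \Cat[\E]\) be a finite diagram, write \(L_0 \DefEq \Lim_j D(j)_0\) and \(L_1 \DefEq \Lim_j D(j)_1\) for the limits in \(\E\), and let \(p_j\) denote the two families of projections.

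The structure maps of the prospective limit category \(\mathbf{L} = (L_0, L_1)\) are then forced by the requirement that each \(p_j\) be an internal functor: \(\Source, \Target \colon L_1 \to L_0\) and \(\Id \colon L_0 \to L_1\) are the unique arrows commuting with all projections, obtained from the universal properties of \(L_0\) and \(L_1\) (the required cone conditions holding because the transition arrows of \(D\) are internal functors). The delicate point, which I expect to be the main obstacle, is the composition map, whose domain is the object of composable pairs \(L_1 \Pullback[\Source][\Target] L_1\). Here I would use that limits commute with limits in \(\E\) to identify this pullback canonically with \(\Lim_j \bigl( D(j)_1 \Pullback[\Source][\Target] D(j)_1 \bigr)\); the composition maps of the \(D(j)\) then form a cone over this diagram, precisely because the transition functors of \(D\) preserve composition, and hence factor through a unique \(\Comp \colon L_1 \Pullback[\Source][\Target] L_1 \to L_1\). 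Verifying this compatibility of the composable-pairs objects with the limit is the one genuinely non-formal step; everything downstream is routine.

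It then remains to check the seven category axioms for \(\mathbf{L}\), that the \(p_j\) are functors forming a cone, and the universal property. Each axiom is an equality of arrows into \(L_0\) or \(L_1\), and since \(L_0\) and \(L_1\) are limits their projection families are jointly monic; hence every such equality may be tested after post-composition with each \(p_j\), where—using the defining equations of \(\Source, \Target, \Id, \Comp\) on \(\mathbf{L}\)—it reduces to the corresponding axiom in \(D(j)\), which holds. In the internal language this reduction is literally the verification one carries out for \(\SetCat\). For universality, given any internal category \(\mathbf{X}\) and a cone of functors \(q_j \colon \mathbf{X} \to D(j)\), the pairs \((q_{j,0}, q_{j,1})\) induce unique mediating arrows \(X_0 \to L_0\) and \(X_1 \to L_1\) in \(\E\); by the same joint-monicity argument these assemble into an internal functor, necessarily the unique one over the cone. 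Finally, the terminal internal category \(\Terminal[\Cat[\E]]\) is the empty-diagram case, with \(L_0 = L_1 = \Terminal\), and the binary product is \(\A \Prod[\Cat[\E]] \B = (A_0 \Prod B_0, A_1 \Prod B_1)\); since finite products thus exist and are computed pointwise, the induced cartesian monoidal structure on \(\Cat[\E]\) is the canonical one.
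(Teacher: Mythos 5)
Your proposal is correct and takes essentially the approach the paper intends: the paper omits the proof entirely, declaring it a routine internal-language version of the set-theoretic argument with limits "induced point-wise," which is precisely the pointwise construction you carry out (and your handling of the object of composable pairs via commutation of limits addresses the one genuinely non-formal step).
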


There is also an obvious underlying-object-of-objects functor \(\Cat[\E] \to \E\), given in the next definition, preserving the cartesian monoidal structure.

\begin{definition}
	The \Def{objects functor} is the monoidal functor \(U \colon \Cat[\E] \to \E\) sending an internal category \(\A\) into its object of objects \(A\), and an internal functor \(F \colon \A \to \B\) into its object component \(F_0 \colon A \to B\).
\end{definition}

We now present a few remarkable examples of internal categories.
	
\begin{example}
	\label{example:discrete-category}
	Let \(A\) be an object of \(\E\).
	The \Def{discrete category} \(\Dis A\) over \(A\) is given by
	\begin{equation*}
		\begin{tikzcd}[column sep = huge]
			\begin{multlined} {(\Dis A)}_0 \\ = A \end{multlined}
				\ar[r, "{\Id[{\Dis A}] = \Id[A]}"]
			& \begin{multlined} {(\Dis A)}_1 \\ = A \end{multlined}
				\ar[l, bend right = 15, "{\Source[{\Dis A}] = \Id[A]}"', shift right=2]
				\ar[l, bend left = 15, "{\Target[{\Dis A}] = \Id[A]}", shift left=2]
			& \begin{multlined} {(\Dis A)}_1 \Pullback[\Source][\Target] {(\Dis A)}_1 \\ \Iso A \end{multlined}
				\ar[l, "{\Comp[{\Dis A}] = \Id[A]}"']
		\end{tikzcd}
	\end{equation*}
	An alternative, more abstract way to look at the discrete category with respect to \Cref{rmk:cat-reindexing} is to notice that \(\Dis A\) is (equivalent to) \(A_! A^* \Terminal[\Cat[\E]]\).
	This construction extends to a monoidal functor \(\Dis \colon \E \to \Cat[\E]\).
\end{example}

\begin{example}
	Let \(A\) be an object of \(\E\).
	The \Def{indiscrete category} \(\Ind A\) over \(A\) is given by
	\begin{equation*}
		\begin{tikzcd}[column sep = huge]
			\begin{multlined} {(\Ind A)}_0 \\ = A \end{multlined}
				\ar[r, "{\Id[{\Ind A}] = \Delta_A}"]
			& \begin{multlined} {(\Ind A)}_1 \\ = A \Prod A \end{multlined}
				\ar[l, bend right = 15, "{\Source[{\Ind A}] = \pi_1}"', shift right=2]
				\ar[l, bend left = 15, "{\Target[{\Ind A}] = \pi_2}", shift left=2]
			& \begin{multlined}
				{(\Ind A)}_1 \Pullback[\Source][\Target] {(\Ind A)}_1 \\
				\Iso A \Prod A \Prod A
			\end{multlined}
				\ar[l, "{\!\!\!\Comp[{\Ind(A)}]\!=\!(\pi_1,\!\pi_3\!)}"']
		\end{tikzcd}
	\end{equation*}
	This construction extends to a monoidal functor \(\Ind \colon \E \to \Cat[\E]\).
\end{example}

The above constructions yield the free and co-free internal categories over an object of \(\E\), as formally stated by the following proposition, whose proof is, again, routine.

\begin{proposition}
	\label{prop:dis-U-ind}
	There are monoidal adjunctions \(\Dis \Adjoint U \Adjoint \Ind\).
\end{proposition}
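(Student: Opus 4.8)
The plan is to establish the two $1$-categorical adjunctions separately, each by exhibiting an explicit natural bijection of hom-sets realised by a transparent unit and counit, and then to upgrade both to monoidal adjunctions at the end, exploiting that every category and functor involved is cartesian monoidal. For $\Dis \Adjoint U$, the crucial observation is that an internal functor $F \colon \Dis A \to \B$ is completely determined by its object component $F_0 \colon A \to B_0$. Indeed, since $(\Dis A)_1 = A$ with identity-on-objects structure maps, preservation of identities forces $F_1(a) = \Id[\B] F_0(a)$ for $a \colon A$; conversely, any $g \colon A \to B_0$ yields a functor by taking its arrow component to be $\Id[\B] \circ g$, the remaining axioms holding because every arrow of $\Dis A$ is an identity and composition there is trivial. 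This produces a bijection $\Cat[\E](\Dis A, \B) \Iso \E(A, U\B)$ which I would present through the unit (the identity, since $U \Dis = \Id[\E]$ on the nose) together with the counit $\epsilon_\B \colon \Dis(B_0) \to \B$ having object component $\Id[B_0]$ and arrow component $\Id[\B]$; the triangle identities then reduce to the immediate equalities $U \epsilon_\B = \Id[B_0]$ and $\epsilon_{\Dis A} = \Id[\Dis A]$.

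Dually, for $U \Adjoint \Ind$ an internal functor $G \colon \A \to \Ind B$ is determined by its object component $G_0 \colon A_0 \to B$: the source and target conditions force $G_1(f) = (G_0 \Source[\A](f), G_0 \Target[\A](f))$ for $f \colon A_1$, and preservation of identities and of composites is then automatic, being a consequence of the internal-category axioms together with the fact that $\Ind B$ carries exactly one arrow with prescribed source and target. Conversely every $h \colon A_0 \to B$ arises this way, giving a bijection $\E(U\A, B) \Iso \Cat[\E](\A, \Ind B)$. I would realise it through the counit (again the identity, since $U \Ind = \Id[\E]$) and the unit $\eta_\A \colon \A \to \Ind(A_0)$ with object component $\Id[A_0]$ and arrow component $(\Source[\A], \Target[\A])$, the triangle identities being, symmetrically, $U \eta_\A = \Id[A_0]$ and $\eta_{\Ind B} = \Id[\Ind B]$. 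All of this is carried out in the internal language of $\E$, and naturality of both bijections in each variable is a routine verification.

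It remains to promote these to monoidal adjunctions. By the earlier propositions $\E$ and $\Cat[\E]$ are cartesian monoidal and the three functors preserve finite products, hence are canonically strong monoidal; $\Dis$ and $\Ind$ were already noted to be monoidal. By doctrinal adjunction, an adjunction whose right adjoint is lax monoidal is monoidal exactly when the induced oplax structure on the left adjoint is strong and the unit and counit are monoidal natural transformations, and in the cartesian setting this is essentially automatic: the comparison maps are the unique product-induced isomorphisms, and a natural transformation between product-preserving functors is monoidal as soon as it is compatible with the projections, which holds by the universal property of the product. I therefore expect the only genuinely delicate—though still formal—point to be checking that the explicit units and counits above commute with the canonical coherence isomorphisms, since this is where the chosen components must be matched against the pointwise product structure of $\Cat[\E]$; everything else is forced by uniqueness of maps into products.
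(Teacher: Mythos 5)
Your proof is correct. The paper offers no argument for this proposition---it simply declares the proof routine---and your explicit units and counits (using that \(U\Dis\) and \(U\Ind\) are the identity on \(\E\), so everything reduces to the counit \(\Dis(B_0)\to\mathbf{B}\) and the unit \(\mathbf{A}\to\Ind(A_0)\)), together with the doctrinal-adjunction observation that in the cartesian setting the monoidal enhancement is automatic, is exactly the standard verification the paper has in mind.
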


As a further example, consider opposite categories.

\begin{example}
	Let \(\A\) be an internal category in \(\E\).
	The opposite category \(\A^{\Op}\) is obtained by switching the source and target arrows of \(\A\), so that \(\Source[\A^{\Op}] = \Target[\A]\) and \(\Target[\A^{\Op}] = \Source[\A]\).
	This construction extends to a monoidal functor \({(\mathhyphen)}^{\Op} \colon \Cat[\E] \to \Cat[\E]\).
\end{example}

Then, it is not difficult to prove the following result.

\begin{proposition}
	\label{prop:op-self-adj}
	The monoidal functor \({(\mathhyphen)}^{\Op}\) is a self-adjoint automorphism.
\end{proposition}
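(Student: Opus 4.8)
The plan is to reduce the entire statement to the single observation that forming opposites is a strict involution on \(\Cat[\E]\); once that is in hand, both being an automorphism and being self-adjoint follow formally.

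First I would verify that \({(\mathhyphen)}^{\Op}\) is involutive. For an internal category \(\A\), the opposite \(\A^{\Op}\) keeps the same \(A_0\) and \(A_1\) but sets \(\Source[\A^{\Op}] = \Target[\A]\) and \(\Target[\A^{\Op}] = \Source[\A]\), with composition arrow \(\Comp[\A^{\Op}]\) given by \(\Comp[\A]\) precomposed with the canonical isomorphism that swaps the two factors, identifying \(A_1 \Pullback[\Target][\Source] A_1\) with \(A_1 \Pullback[\Source][\Target] A_1\). Applying the construction once more restores \(\Source[\A]\) and \(\Target[\A]\) and precomposes with the swap a second time; since that swap isomorphism is its own inverse, the two swaps compose to the identity on \(A_1 \Pullback[\Source][\Target] A_1\), and we obtain \((\A^{\Op})^{\Op} = \A\) on the nose. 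On a functor \(F \colon \A \to \B\) the opposite \(F^{\Op} \colon \A^{\Op} \to \B^{\Op}\) retains the same component arrows \(F_0\) and \(F_1\), so likewise \((F^{\Op})^{\Op} = F\). Hence \({(\mathhyphen)}^{\Op} \circ {(\mathhyphen)}^{\Op}\) is the identity functor on \(\Cat[\E]\), so \({(\mathhyphen)}^{\Op}\) is its own inverse and is in particular an automorphism.

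Self-adjointness is then purely formal, since an isomorphism of categories is both a left and a right adjoint of its inverse. Concretely, I would exhibit the adjunction \({(\mathhyphen)}^{\Op} \Adjoint {(\mathhyphen)}^{\Op}\) by taking both the unit and the counit to be the identity natural transformation, which is legitimate precisely because \({(\mathhyphen)}^{\Op} \circ {(\mathhyphen)}^{\Op}\) is literally the identity functor; the two triangle identities then collapse to \(\Id \circ \Id = \Id\). Equivalently, the natural bijection \(\Cat[\E](\A^{\Op}, \B) \Iso \Cat[\E](\A, \B^{\Op})\) sends \(H \colon \A^{\Op} \to \B\) to \(H^{\Op} \colon \A \to \B^{\Op}\), a bijection whose inverse is again given by \({(\mathhyphen)}^{\Op}\) (using \((\A^{\Op})^{\Op} = \A\)), and its naturality in \(\A\) and \(\B\) is immediate from functoriality of opposites. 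The monoidal qualifier adds nothing: opposites preserve the cartesian structure, with \(\Terminal^{\Op} = \Terminal\) and \((\A \Prod \B)^{\Op} = \A^{\Op} \Prod \B^{\Op}\), so the identity unit and counit are automatically monoidal.

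The only step that demands genuine care---and hence the expected obstacle---is the strictness claim in the first paragraph: one must fix precisely how \(\Comp[\A^{\Op}]\) is expressed through the two chosen pullbacks and check that the double swap really is the identity, so that the involution is strict rather than merely up to canonical isomorphism. If one instead prefers to keep everything up to isomorphism, the unit and counit become the evident coherence isomorphisms and the argument is unchanged, at the cost of slightly more bookkeeping.
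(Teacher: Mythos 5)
Your proposal is correct: the paper omits the proof of this proposition as routine, and your argument---establishing that \({(\mathhyphen)}^{\Op}\) is a strict involution on \(\Cat[\E]\) (the double swap on the composition pullback being the identity) and then deducing both the automorphism and the self-adjunction with identity unit and counit---is exactly the evident argument the paper has in mind. Your closing remark correctly identifies the one point requiring care, namely the strictness of \((\A^{\Op})^{\Op} = \A\) with respect to the chosen pullbacks.
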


}

\subsection{Exponentials of Internal Categories}
\label{sec:exp-int-cats} {

\NewDocumentCommand{\A} {} {\IntCatStyle{A}}
\ProvideDocumentCommand{\B} {} {\IntCatStyle{B}}
\RenewDocumentCommand{\C} {} {\IntCatStyle{C}}
\NewDocumentCommand{\D} {} {\IntCatStyle{D}}
\NewDocumentCommand{\F} {} {\symcal{F}}
\NewDocumentCommand{\Fs} {} {\F_{\symrm{s}}}
\NewDocumentCommand{\I} {} {\CatStyle{I}}
\ProvideDocumentCommand{\M} {} {\CatStyle{M}}
\NewDocumentCommand{\V} {} {\CatStyle{V}}
\NewDocumentCommand{\X} {} {\CatStyle{X}}
\NewDocumentCommand{\Y} {} {\CatStyle{Y}}
\NewDocumentCommand{\W} {} {\CatStyle{W}}
\NewDocumentCommand{\Z} {} {\CatStyle{Z}}

We shall present the cartesian closed structure of \(\Cat[\E]\), which is a generalization of the cartesian closed structure of \(\Cat\).

Remember that the standard construction of categories of functors makes essential use of sets of functions.
In light of that, we need to assume that the ambient category \(\E\) is locally cartesian closed, meaning that its internal language will be the simply typed lambda calculus extension of finite limit logic.
In that regard, it shall be useful to introduce the following notation.

\begin{notation}
	If \(A\) and \(B\) are objects of \(\E\), let \(\Eval[\E] \colon B^A \Prod A \to B\) be the evaluation morphism.
	The notation for the evaluation arrow should also be decorated with \(A\) and \(B\), but it is unnecessary as those are usually clear from the context.
\end{notation}

It is also useful to introduce a convention to denote subobjects yielded by an equalizer.

\begin{notation}
	Given two terms, \(a \colon A \TiC t(a) \colon B\) and \(a \colon A \TiC t'(a) \colon B\), the subobject of \(A\) of those \(a \colon A\) such that \(t(a) = t'(a)\) is given by the equalizer of the two parallel arrows \(A \to B\) yielded by \(t\) and \(t'\).
\end{notation}

The functors between two categories form a set, suggesting that \(\Cat\) is enriched over \(\SetCat\).
Analogously, we expect \(\Cat[\E]\) to be enriched over \(\E\), in the sense made precise by the statement of the following proposition.
Then, the theory of categories internal to \(\E\) becomes clearer from the perspective of enriched category theory.

\begin{proposition}
	\label{prop:CatE-enriched}
	There is a category enriched in \(\E\) (which shall be called \(\Cat[\E]\) with abuse of notation) whose underlying category is isomorphic to \(\Cat[\E]\) itself.
\end{proposition}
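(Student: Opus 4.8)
The plan is to exhibit the enriched structure directly by constructing, for each pair of internal categories \(\A\) and \(\B\), a hom-object \(\Fun{\A}{\B}\) in \(\E\) whose global points recover exactly the internal functors \(\A \to \B\). Since \(\E\) is locally cartesian closed it is in particular cartesian closed (the slice over \(\Terminal\) being \(\E\) itself), so the exponentials \(B_0^{A_0}\) and \(B_1^{A_1}\) exist. I would define \(\Fun{\A}{\B}\) as the subobject of \(B_0^{A_0} \Prod B_1^{A_1}\) cut out by the functor axioms. Concretely, a generalized element is a pair \((F_0, F_1)\), and each of the three axioms (compatibility with source and target, preservation of identities, preservation of composition) is an equation between two terms depending on \((F_0, F_1)\) together with a variable ranging over \(A_1\), over \(A_0\), or over \(A_1 \Pullback[\Source][\Target] A_1\), respectively. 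Transposing each such equation across the relevant exponential turns it into an equality of two arrows from \(B_0^{A_0} \Prod B_1^{A_1}\) into a suitable exponential, and I would take \(\Fun{\A}{\B}\) to be the simultaneous equalizer of these finitely many pairs. This uses only exponentials and equalizers, both available in \(\E\), and sidesteps any appeal to dependent products for the universal quantification.

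The key identification is then that \(\E(\Terminal, \Fun{\A}{\B})\) is in natural bijection with the set of internal functors \(\A \to \B\). Indeed, a global point of \(B_0^{A_0} \Prod B_1^{A_1}\) is, by the exponential adjunction, precisely a pair of arrows \(F_0 \colon A_0 \to B_0\) and \(F_1 \colon A_1 \to B_1\); and such a point factors through the equalizer \(\Fun{\A}{\B}\) if and only if the transposed equations hold on global points, which is exactly the statement that \((F_0, F_1)\) satisfies the functor axioms. This recovers the hom-sets of \(\Cat[\E]\).

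Next I would supply the composition and identity maps. The composition morphism \(\Fun{\B}{\C} \Prod \Fun{\A}{\B} \to \Fun{\A}{\C}\) is defined on a generalized element \((G, F)\) by sending it to the pair with components \(G_0 F_0\) and \(G_1 F_1\), built from evaluation and the exponential structure; reasoning in the internal language one checks that this composite again satisfies the functor axioms, so that the map indeed factors through \(\Fun{\A}{\C}\). The identity \(\Terminal \to \Fun{\A}{\A}\) picks out the pair \((\Id[A_0], \Id[A_1])\), which factors through \(\Fun{\A}{\A}\) since the identity functor is a functor. The associativity and unit laws for enriched composition are equalities of morphisms into a hom-object; since each hom-object is a subobject of a product of exponentials, it suffices to verify them after composing with the defining monomorphism, where they reduce to associativity and unitality of composition of arrows in \(\E\).

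Finally, to conclude that the underlying category of this \(\E\)-enriched category is isomorphic to \(\Cat[\E]\), I would observe that the bijection of the second paragraph is the identity on objects and, by the very definition of the composition and identity morphisms, carries enriched composition of global points to composition of internal functors and the enriched identity to the identity functor. Hence the underlying category and \(\Cat[\E]\) agree on objects, hom-sets, composition, and identities, yielding an isomorphism of categories. The main obstacle is the careful construction of \(\Fun{\A}{\B}\) as a genuine subobject, in particular the correct handling of the universal quantification in the functor axioms, which the transpose-and-equalize device resolves; checking that composition lands in the intended hom-object and that the enriched axioms hold is then a routine internal-language computation.
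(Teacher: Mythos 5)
Your proposal is correct and follows essentially the same route as the paper: the hom-object is the same subobject of \(B_0^{A_0} \Prod B_1^{A_1}\) carved out by the (transposed) functoriality equations, composition and identities are given via evaluation, and the bijection between global points and internal functors yields the isomorphism of underlying categories. The explicit transpose-and-equalize description is just an unwinding of the paper's use of the internal language and its stated convention that term equalities denote equalizers.
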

\begin{proof}
	The hom-object \(\Hom[\Cat[\E]](\A, \B)\) of internal categories \(\A\) and \(\B\) represents the functors \(\A \to \B\), and is defined as the subobject of \(B_0^{A_0} \Prod B_1^{A_1}\) of those \(F = (F_0, F_1)\) satisfying the functoriality axioms, i.e.\ such that
	\[
		\lambda f \colon a_0 \to_{\A} a_1.\ \Eval[\E](F_0, a_0) \xrightarrow{\Eval[\E](F_1, f)} \Eval[\E](F_0, a_1)
	\]
	and
	\[
		\begin{multlined}
			\lambda f \colon a_0 \to_{\A} a_1, g \colon a_1 \to_{\A} a_2.\ \Eval[\E](F_1, g \Comp[\A] f) \\
			=\lambda f \colon a_0 \to_{\A} a_1, g \colon a_1 \to_{\A} a_2.\ \Eval[\E](F_1, g) \Comp[\B] \Eval[\E](F_1, f).
		\end{multlined}
	\]
	
	The composition of internal categories \(\A\), \(\B\) and \(\C\) is the arrow
	\[
		\Comp[\Cat[\E]](\A, \B, \C) \colon \Hom[\Cat[\E]](\B, \C) \Prod \Hom[\Cat[\E]](\A, \B) \to  \Hom[\Cat[\E]](\A, \C)
	\]
	defined in context \(F \colon \Hom[\Cat[\E]](\A, \B)\), \(G \colon \Hom[\Cat[\E]](\B, \C)\) as
	\begin{gather*}
		\begin{multlined}
				{\Comp[\Cat[\E]](\A, \B, \C)(F, G)}_0
				\DefEq \lambda a \colon A_0.\ \Eval[\E] \big( G_0, \Eval[\E](F_0, a) \big)
				\colon {C_0}^{A_0}
		\end{multlined} \\
		\begin{multlined}
				{\Comp[\Cat[\E]](\A, \B, \C)(F, G)}_1
				\DefEq \lambda f \colon A_1.\ \Eval[\E] \big( G_1, \Eval[\E](F_1, f) \big)
				\colon {C_1}^{A_1}
		\end{multlined}
	\end{gather*}
	and the identity of an internal category is defined analogously.

	The verification that these data give an enrichment is a simple exercise in the internal language, and the points of the hom-object \(\Hom[\Cat[\E]](\A, \B)\) are evidently in bijective correspondence with functors \(\A \to \B\).
\end{proof}

The theory developed in \Cref{sec:int_cat} extends to the context of enriched category theory.
For example, consider the following proposition.

\begin{proposition}
	The adjunctions from \Cref{prop:dis-U-ind} are \(\E\)-enriched. %
\end{proposition}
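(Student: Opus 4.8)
The plan is to promote each of the two adjunctions \(\Dis \Adjoint U\) and \(U \Adjoint \Ind\) from the ordinary adjunctions of \cref{prop:dis-U-ind} to \(\E\)-enriched ones, using the enrichment of \(\Cat[\E]\) over \(\E\) from \cref{prop:CatE-enriched} together with the self-enrichment of \(\E\) provided by local cartesian closure. For each adjunction I would establish two things: that both functors involved lift to \(\E\)-functors, and that the defining hom-object isomorphism---namely \(\Hom[\Cat[\E]](\Dis A, \B) \Iso B_0^{A}\) for the first adjunction and \(\Hom[\Cat[\E]](\A, \Ind B) \Iso B^{A_0}\) for the second---is an isomorphism in \(\E\) which is moreover \(\E\)-natural in both variables. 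Since the induced bijections on points are exactly the underlying adjunctions, which are already known, all the work lies in realizing these isomorphisms at the level of the representing objects, and I would carry it out explicitly in the internal language.

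For the enriched functor structure I would let \(U\) act on hom-objects by the projection \(F = (F_0, F_1) \mapsto F_0\), that is, by the composite \(\Hom[\Cat[\E]](\A, \B) \hookrightarrow B_0^{A_0} \Prod B_1^{A_1} \to B_0^{A_0}\); compatibility with enriched composition and identities is immediate, since composition in \(\Cat[\E]\) acts on object-parts by ordinary composition in \(\E\). The functor \(\Dis\) acts on hom-objects by \(h \mapsto (h, h)\) and \(\Ind\) by \(h \mapsto (h, h \Prod h)\), and in each case one checks that the resulting pair satisfies the functoriality axioms. For \(\Dis \Adjoint U\) the adjunction isomorphism \(\Hom[\Cat[\E]](\Dis A, \B) \Iso B_0^{A}\) is the object-component of \(U\), with inverse sending \(h \colon A \to B_0\) to the functor \((h, \Id[\B] \circ h)\): since every arrow of \(\Dis A\) is an identity, functoriality forces the arrow-component of any \(F \colon \Dis A \to \B\) to equal \(\Id[\B] \circ F_0\), so the functoriality subobject is exactly the graph of this inverse. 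For \(U \Adjoint \Ind\) the isomorphism \(\Hom[\Cat[\E]](\A, \Ind B) \Iso B^{A_0}\) is again the object-component, with inverse \(h \mapsto (h, (h \circ \Source[\A], h \circ \Target[\A]))\): a functor into an indiscrete category preserves composition automatically and is determined by matching sources and targets, so once more the functoriality subobject is the graph of the inverse.

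What remains---and where the bookkeeping is concentrated---is to verify, in the internal language, that each candidate inverse factors through the relevant functoriality equalizer, that it is a two-sided inverse of the object-projection, and that the two isomorphisms are \(\E\)-natural in both variables. The first two verifications are termwise computations that simply reproduce the unit and counit of \cref{prop:dis-U-ind}. The genuine obstacle is the enriched naturality: I would spell out the naturality squares for the two hom-bifunctors and check that they commute as arrows of \(\E\). Here one must avoid appealing to points, as \(\E\) need not be well-pointed; the saving grace is that every arrow appearing in these squares is completely determined by its object-component---mirroring the underlying adjunction---so the squares commute by construction and the check is routine. Finally, the two adjunctions are structurally parallel, the role played for \(\Dis \Adjoint U\) by the fact that all arrows of a discrete category are identities being played for \(U \Adjoint \Ind\) by the fact that composition in an indiscrete category is forced, so one verification can be transcribed into the other.
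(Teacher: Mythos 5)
Your proposal is correct, and in fact the paper offers no proof of this proposition at all—it is stated as a routine instance of the theory extending to the enriched setting. Your argument (forcing the arrow-component of a functor out of a discrete, respectively into an indiscrete, category so that the hom-objects \(\Hom[\Cat[\E]](\Dis A, \B)\) and \(\Hom[\Cat[\E]](\A, \Ind B)\) are carved out as graphs over \(B_0^{A}\) and \(B^{A_0}\), then checking \(\E\)-naturality at the level of representing objects rather than points) is exactly the verification the paper leaves implicit.
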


Likewise, we get an enriched version of \Cref{rmk:cat-reindexing}.

\begin{remark}
	Let \(i \colon J \to I\) be an arrow in \(\E\).
	Then, \(\Cat[\sfrac{\E}{I}]\) is enriched on \(\sfrac{\E}{J}\) through the change of base \(i^* \colon \sfrac{\E}{I} \to \sfrac{\E}{J}\), and \(i_! \Adjoint i^* \colon \Cat[\sfrac{\E}{I}] \to \Cat[\sfrac{\E}{J}]\) is an adjunction of \(\sfrac{\E}{J}\)-enriched functors.
\end{remark}

We can finally prove that \(\Cat[\E]\) is cartesian closed.
Moreover, as it is clear from the following proof, if \(\A\) and \(\B\) are categories in \(\E\), then the points of \(\B^{\A}\) are in bijective correspondence with the functors \(\A \to \B\).

\begin{proposition}
	\label{prop:CatE-cart-closed}
	The category \(\Cat[\E]\) is cartesian closed.
\end{proposition}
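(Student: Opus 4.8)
The plan is to exhibit, for each internal category \(\A\), a right adjoint \((\mathhyphen)^{\A}\) to the product functor \((\mathhyphen) \Prod[\Cat[\E]] \A\); since \(\Cat[\E]\) is already known to have finite products, producing this right adjoint suffices for cartesian closedness. First I would construct the internal \emph{functor category} \(\B^{\A}\). For its object of objects I take the hom-object \(\Hom[\Cat[\E]](\A, \B)\) of \Cref{prop:CatE-enriched}, which already represents internal functors \(\A \to \B\). For its object of arrows I take the object of internal natural transformations, namely the subobject of \(\Hom[\Cat[\E]](\A, \B) \Prod \Hom[\Cat[\E]](\A, \B) \Prod B_1^{A_0}\) consisting of the triples \((F, G, \alpha)\) such that \(a \colon A_0 \TiC \alpha_a \colon F_0(a) \to_{\B} G_0(a)\) and the naturality square commutes, both being equalizer conditions expressible in the internal language of the locally cartesian closed \(\E\). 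The source and target are the two projections onto \(\Hom[\Cat[\E]](\A, \B)\), the identity picks out \((F, F, \Id[F])\), and composition is the vertical composition of natural transformations, sending a composable pair \((F, G, \alpha)\), \((G, H, \beta)\) to \((F, H, \alpha\beta)\) with \({(\alpha\beta)}_a \DefEq \beta_a \Comp[\B] \alpha_a\).

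Next I would check that these data satisfy the axioms of an internal category. As every operation is defined pointwise through the structure of \(\B\), each axiom reduces to the corresponding axiom of \(\B\) evaluated at each \(a \colon A_0\); this is a routine verification in the internal language, exactly paralleling the set-theoretic proof that \([\mathcal{A}, \mathcal{B}]\) is a category. By construction the points of \(\B^{\A}\) are then in bijective correspondence with internal functors \(\A \to \B\), as announced before the statement.

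Then comes the heart of the argument: the adjunction \((\mathhyphen) \Prod[\Cat[\E]] \A \Adjoint (\mathhyphen)^{\A}\), that is, a bijection \(\Cat[\E](\C \Prod[\Cat[\E]] \A, \B) \Iso \Cat[\E](\C, \B^{\A})\) natural in \(\C\) and \(\B\). I would describe it as currying. Given \(H \colon \C \Prod[\Cat[\E]] \A \to \B\), its transpose \(\widehat{H} \colon \C \to \B^{\A}\) sends an object \(c\) to the functor \(a \mapsto H(c, a)\) and an arrow \(\gamma \colon c \to c'\) to the natural transformation with components \(a \mapsto H_1(\gamma, \Id[\A](a))\); functoriality of \(\widehat{H}\) and naturality of each \(\widehat{H}(\gamma)\) follow from functoriality of \(H\) together with the interchange law. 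Conversely, from \(K \colon \C \to \B^{\A}\) I recover \(H\) by uncurrying, \(H(c, a) \DefEq K_0(c)_0(a)\) on objects and \(H_1(\gamma, f) \DefEq K_0(c')_1(f) \Comp[\B] K_1(\gamma)_a\) on arrows, the latter being well defined precisely because \(K_1(\gamma)\) is natural. Both assignments are realized as genuine arrows of \(\E\) via the evaluation maps \(\Eval[\E]\) and the exponential adjunction of \(\E\), and a direct computation in the internal language shows they are mutually inverse and natural.

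The main obstacle I anticipate is not the construction of \(\B^{\A}\) itself but the passage from the pointwise, internal-language description of the transpose to an actual arrow of \(\E\), together with the verification that currying lands inside the subobject \(\Hom[\Cat[\E]](\A, \B)\) (respectively inside the object of natural transformations) rather than merely in the ambient exponentials \(B_0^{A_0} \Prod B_1^{A_1}\) and \(B_1^{A_0}\). This is exactly where local cartesian closure of \(\E\) is essential: it supplies the exponential objects and the currying isomorphisms at the level of \(\E\), ensuring that the functoriality and naturality equalizer-conditions defining \(\B^{\A}\) are preserved under transposition. Once this is secured, naturality in \(\C\) and \(\B\) is a formal consequence, and the cartesian closedness of \(\Cat[\E]\) follows.
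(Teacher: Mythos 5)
Your construction of \(\B^{\A}\) — objects given by the hom-object \(\Hom[\Cat[\E]](\A,\B)\), arrows by the equalizer carving out natural transformations inside \({(\B^{\A})}_0 \Prod {(\B^{\A})}_0 \Prod B_1^{A_0}\), with vertical composition and identities — is exactly the paper's construction, and your currying argument fills in what the paper dismisses as ``a standard argument.'' The proposal is correct and takes essentially the same route; the only cosmetic difference is that the paper states the adjunction as an isomorphism of hom-objects natural in \(\A'\) rather than as a bijection of external hom-sets.
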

\begin{proof}
	Let \(\A\) and \(\B\) be internal categories of \(\E\).
	We define the exponential object \(\B^{\A}\), representing the internal category of functors \(\A \to \B\) and natural transformations between them:
	\begin{description}
		\item[Object of objects] \({(\B^{\A})}_0\) is \(\Hom[\Cat[\E]](\A, \B)\), as defined in \Cref{prop:CatE-enriched}.
		\item[Object of arrows] \({(\B^{\A})}_1\) is the subobject of \({(\B^{\A})}_0 \Prod {(\B^{\A})}_0 \Prod B_1^{A_0}\) given, in context \(\big( F \colon {(\B^{\A})}_0, G \colon {(\B^{\A})}_0, \alpha \colon B_1^{A_0} \big)\), by the axiom
			\[
				\lambda f \colon a_0 \to_{\A} a_1.\
				\begin{tikzcd}[ampersand replacement = \&]
					\Eval[\E](F_0, a_0)
							\ar[r, "{\Eval[\E](\alpha, a_0)}"]
							\ar[d, "{\Eval[\E](F_1, f)}"']
						\&\Eval[\E](G_0, a_0)
							\ar[d, "{\Eval[\E](G_1, f)}"] \\
					\Eval[\E](F_0, a_1)
							\ar[r, "{\Eval[\E](\alpha, a_1)}"']
						\&\Eval[\E](G_0, a_1)
				\end{tikzcd}
			\]
			(that is, an equalizer with values in \(B_1^{A_1}\)).
		\item[Composition] is the arrow \(\Comp[\B^{\A}] \colon {(\B^{\A})}_1 \Pullback[\Source][\Target] {(\B^{\A})}_1 \to {(\B^{\A})}_1\) defined, in context \( \big( (G, H, \beta), (F, G, \alpha) \big) \colon {(\B^{\A})}_1 \Pullback[\Source][\Target] {(\B^{\A})}_1\), as
			\[
				(G, H, \beta) \Comp[\B^{\A}] (F, G, \alpha)
				\DefEq \big( F, H, \lambda a \colon A_0.\ \Eval[\E](\beta, a) \Comp[\B] \Eval[\E](\alpha, a) \big)
			\]
		\item[Identity] is the arrow \(\Id[\B^{\A}] \colon {(\B^{\A})}_0 \to {(\B^{\A})}_1\) defined, in context \(F \colon {(\B^{\A})}_0\), as
			\[
				\Id[\B^{\A}](F) \DefEq \big( F, F, \lambda a \colon A_0.\ \Id[\B](F_0(a)) \big).
			\]
	\end{description}
	
	Then, a standard argument shows that there is an isomorphism
	\[
		\Hom[\Cat[\E]](\A' \Prod[\Cat[\E]] \A, \B) \Iso \Hom[\Cat[\E]](\A', \B^{\A})
	\]
	natural in \(\A'\).
\end{proof}

Of course, the fact that \(\Cat[\E]\) is cartesian closed tells us that it is enriched in itself.
That extends the fact that it is enriched in \(\E\).
In fact, the enrichment in \(\E\) is essentially obtained from that in \(\Cat[\E]\) by change of base along the objects functor \(U \colon \Cat[\E] \to \E\).
}

\section{Completeness of Internal Categories}
\label{sec:completeness} {

\NewDocumentCommand{\A} {} {\IntCatStyle{A}}
\ProvideDocumentCommand{\B} {} {\IntCatStyle{B}}
\RenewDocumentCommand{\C} {} {\IntCatStyle{C}}
\NewDocumentCommand{\D} {} {\IntCatStyle{D}}
\NewDocumentCommand{\F} {} {\symcal{F}}
\NewDocumentCommand{\Fs} {} {\F_{\symrm{s}}}
\NewDocumentCommand{\I} {} {\IntCatStyle{I}}
\ProvideDocumentCommand{\M} {} {\CatStyle{M}}
\NewDocumentCommand{\V} {} {\CatStyle{V}}
\NewDocumentCommand{\X} {} {\CatStyle{X}}
\NewDocumentCommand{\Y} {} {\CatStyle{Y}}
\NewDocumentCommand{\W} {} {\CatStyle{W}}
\NewDocumentCommand{\Z} {} {\CatStyle{Z}}

This section introduces the main topics of the paper, namely internal limits and completeness of internal categories.
The definitions of such notions have to be considered carefully, for (in the standard external setting) they involve universal and existential quantifications, which may not be available in the internal language of the ambient category.
For this reason, it is not possible to simply translate the standard definitions in the internal language, and indeed, as we shall see, it is not possible to give purely internal definitions.
Still, it is paramount that the definitions are meaningful from the point of view of the internal logic.

Despite not being immediately obvious, it is necessary to assume that the ambient category \(\E\) is locally cartesian closed.
In particular, that means that the results from \Cref{sec:exp-int-cats} hold and that there are internal categories of functors.

\subsection{Diagrams and Cones}

As obvious as they might seem, we shall spell out the definitions of diagrams and cones over diagrams in the internal context.

\begin{definition}[diagram]
	Let \(\A\) and \(\D\) be internal categories in \(\E\).		
	A \Def{diagram} of shape \(\D\) in \(\A\) is an internal functor \(D \colon \D \to \A\).
\end{definition}

\begin{definition}[cone]
	A \Def{cone} over a diagram \(D \colon \D \to \A\) with \Def{vertex} \(V \colon \Terminal[\Cat[\E]] \to \A\)  is a natural transformation \(\gamma \colon \Delta V \to \Name{D} \colon \Terminal[\Cat[\E]] \to \PowObj{\D}{\A}\), where \(\Delta \colon \A \to \PowObj{\D}{\A}\) is the diagonal functor.
	The situation is shown in the following diagram.
	\[
		\begin{tikzcd}
			\Terminal[\Cat[\E]]
					\ar[r, equal]
					\ar[d, "{V}"']
				&\Terminal[\Cat[\E]]
					\ar[d, "{\Name{D}}"] \\
			\A
					\ar[r, "{\Delta}"']
					\ar[ur, Rightarrow, "{\gamma}"]
				&\PowObj{\D}{\A}
		\end{tikzcd}
	\]
	
	Let \(\gamma_i \colon V_i \Delta \to \Name{D} \colon \Terminal[\Cat[\E]] \to \PowObj{\D}{\A}\) for \(i = 0, 1\) be two cones over the diagram \(D \colon \D \to \A\) with tips \(V_i \colon \Terminal[\Cat[\E]] \to \A\) respectively.
	A morphism of cones \(h \colon \gamma_0 \to \gamma_1\) is given by a natural transformation \(h \colon V_0 \to V_1\) such that \(\gamma_1 \Comp (\Delta h) = \gamma_0\), as represented in the following diagram.
	\[
		\begin{tikzcd}[row sep = large, column sep = large]
			\Terminal[\Cat[\E]]
					\ar[r, equal]
					\ar[d, bend right, "{V_0}"' near start, ""{name=s, left}]
					\ar[d, bend left, "{V_1}" near start, ""'{name=t, right}]
					\ar[from=s, to=t, Rightarrow, "{h}"]
				&\Terminal[\Cat[\E]]
					\ar[d, "{\Name{D}}"] \\
			\A
					\ar[r, "{\Delta}"']
					\ar[ur, Rightarrow, "{\gamma_1}"']
				&\PowObj{\D}{\A}
		\end{tikzcd}
		=
		\begin{tikzcd}[row sep = large, column sep = large]
			\Terminal[\Cat[\E]]
					\ar[r, equal]
					\ar[d, "{V_0}"']
				&\Terminal[\Cat[\E]]
					\ar[d, "{\Name{D}}"] \\
			\A
					\ar[r, "{\Delta}"']
					\ar[ur, Rightarrow, "{\gamma_0}"]
				&\PowObj{\D}{\A}
		\end{tikzcd}
	\]
\end{definition}

Dually, we can give the definition of cocone.

\begin{definition}[cocone]
	A \Def{cocone} over a diagram \(D \colon \D \to \A\) with \Def{vertex} \(V \colon \Terminal[\Cat[\E]] \to~\A\) is a natural transformation \(\gamma \colon \Name{D} \to  \Delta V \colon \Terminal[\Cat[\E]] \to \PowObj{\D}{\A}\).
\end{definition}

All the previous considerations made for cones apply, dually, for cocones as well.

Let \(I\) be an object of \(\E\), to be intended as an indexing object.
We know from \Cref{rmk:cat-reindexing} that there are two adjunctions, \(I_! \Adjoint I^* \colon \E \to \sfrac{\E}{I}\) and \(I_! \Adjoint I^* \colon \Cat[\E] \to~\Cat[\sfrac{\E}{I}]\).
Moreover, the canonical morphism \(i \colon I^* \PowObj{\D}{\A} \to \PowObj{I^* \D}{I^* \A}\) is an isomorphism because \(\E\) is locally cartesian closed.
Then, given a diagram \(D \colon \D \to \A\), we get a diagram \(I^* D \colon I^* \D \to I^* \A\), and, given a cone
\[
	\begin{tikzcd}
		\Terminal[\Cat[\E]]
				\ar[r, equal]
				\ar[d, "{V}"']
			&\Terminal[\Cat[\E]]
				\ar[d, "{\Name{D}}"] \\
		\A
				\ar[r, "{\Delta}"']
				\ar[ur, Rightarrow, "{\gamma}"]
			&\PowObj{\D}{\A}
	\end{tikzcd}
\]
over \(D\), we get a cone \(i (I^* \gamma)\)
\[
	\begin{tikzcd}
		\Terminal[\Cat[\sfrac{\E}{I}]]
				\ar[r, equal]
				\ar[d, "{I^* V}"']
			&\Terminal[\Cat[\sfrac{\E}{I}]]
				\ar[d, "{I^* \Name{D}}"]
				\ar[r, equal]
			&\Terminal[\Cat[\sfrac{\E}{I}]]
				\ar[d, "{\Name{I^* D}}"] \\
		I^* \A
				\ar[r, "{I^* \Delta}"']
				\ar[rr, bend right, "{\Delta}"]
				\ar[ur, Rightarrow, "{I^* \gamma}"]
			&I^* \PowObj{\D}{\A}
				\ar[r, "{i}"']
			&\PowObj{I^* \D}{I^* \A}
	\end{tikzcd}
\]
over \(I^* D\).
Notice that \(I^*\) preserves the terminal object, \(i (I^* \Name{D}) = \Name{I^* D}\) and \(i (I^* \Delta) = \Delta\).
Then, we have a functor \(I^* \colon \Cns{D} \to \Cns{I^* D}\).

It is worth noting that the previous notions are all internal, in the sense that they can be expressed by the internal language of \(\E\).
Indeed, the internal category of cones over a diagram \(D \colon \D \to \A\) is given by the lax pullback
\[
	\begin{tikzcd}
		\Cns{D}
				\ar[r] \ar[d] \ar[dr, phantom, "\lrcorner"{description, very near start}]
			&\Terminal[\Cat[\E]]
				\ar[d, "{\Name{D}}"] \\
		\A
				\ar[r, "{\Delta}"']
				\ar[ur, Rightarrow]
			&\PowObj{\D}{\A}
	\end{tikzcd}
\]
(it is also easy to build \(\Cns{D}\) explicitly, thus showing the existence of such lax pullback).
Then, the category of points of \(\Cns{D}\) is the external category of cones over \(D\) and their transformations;
in particular, any cone \((V, \gamma)\) over \(D\) corresponds uniquely to a global section \((V, \gamma) \colon \Terminal[\Cat[\E]] \to \Cns{D}\).
Moreover, if \(I\) is an object of \(\E\), an \(I\)-indexed family of cones over \(D\) is given by a 2-cell
\begin{equation*}
	\begin{tikzcd}
		\I
				\ar[r] \ar[d, "V"']
			&\Terminal[\Cat[\E]]
				\ar[d, "{\Name{D}}"] \\
		\A
				\ar[r, "{\Delta}"']
				\ar[ur, Rightarrow, "\gamma"]
			&\PowObj{\D}{\A}
	\end{tikzcd}
\end{equation*}
where \(\I\) is \(I_! I^* \Terminal[\Cat[\E]]\) (intuitively, the discrete category over the object \(I\)).
Analogously, the category of all cones (over any diagram) of shape \(\D\) over \(\A\) is given by the following lax pullback.
\[
	\begin{tikzcd}
		\Cns{\D}
				\ar[r] \ar[d] \ar[dr, phantom, "\lrcorner"{description, very near start}]
			&\PowObj{\D}{\A}
				\ar[d, equal] \\
		\A
				\ar[r, "{\Delta}"']
				\ar[ur, Rightarrow]
			&\PowObj{\D}{\A}
	\end{tikzcd}
\]
Dually, the internal category of cocones over a diagram \(D \colon \D \to \A\) is given by the lax pullback
\[
	\begin{tikzcd}
		\CoCns{D}
				\ar[r] \ar[d] \ar[dr, phantom, "\lrcorner"{description, very near start}]
			&\Terminal[\Cat[\E]]
				\ar[d, "{\Name{D}}"] \\
		\A
				\ar[r, "{\Delta}"']
				\ar[ur, Leftarrow]
			&\PowObj{\D}{\A}
	\end{tikzcd}.
\]

\subsection{Limits}

A universal cone for a given diagram is a terminal object in the category of cones over such diagram.
Though, we have to be careful about which category of cones we are talking about, and what it means to be terminal in it.
Since our definition aims to be internal in essence, what we want is a terminal object, as defined by the internal language, in the internal category of cones.
That is more than a terminal object in the external category of cones: indeed, stability under pullback has to be enforced, thus realizing the intuition that, if \(\lim_D\) is a limit for a diagram \(D \colon \D \to \A\) and \(I\) is an object of \(\E\), then \(I^* \lim_D\) should be a limit for \(I^* D\).

\begin{definition}[universal cone/limit]
	\label{def:internal-limit}
	A cone \((V, \gamma)\) over a diagram \(D \colon \D \to \A\) is \Def{universal} if it is internally a terminal object for \(\Cns{D}\); that means that, for every object \(I\) of \(\E\), the cone \(I^*(V, \gamma)\) over \(I^* D\) is a terminal object in the external category of cones over \(I^* D\).
	The vertex of a universal cone over \(D\) is also called the \Def{limit} of \(D\).
\end{definition}

Dually, we give the definitions of universal cocone and colimit.

\begin{definition}[universal cocone/colimit]
	\label{def:internal-colimit}
	A cocone \((V, \gamma)\) over a diagram \(D \colon \D \to \A\) is \Def{universal} if it is internally an initial object for \(\CoCns{D}\); that means that, for every object \(I\) of \(\E\), the cocone \(I^*(V, \gamma)\) over \(I^* D\) is an initial object in the external category of cocones over \(I^* D\).
	The vertex of a universal cocone over \(D\) is also called the \Def{colimit} of \(D\).
\end{definition}

It is inconvenient that stability needs to be imposed, but such is the price of doing everything concretely, without relying on the internal logic.
However, once the definition has been stabilized, we get all the desired consequences of an internal definition.
For example, the universal property does not merely hold for cones, but for indexed families of cones as well: that is to say, the universal quantifier for all cones has the expected property.

Indeed, let \(D \colon \D \to \A\) be a diagram admitting a universal cone \((\lim_D, \pi)\).
Then, if \(I\) is an object of \(\E\), consider an \(I\)-indexed family of cones over \(D\)
\begin{equation}
\label{eq:I-family-cones}
	\begin{tikzcd}
		\I
				\ar[r] \ar[d, "V"']
			&\Terminal[\Cat[\E]]
				\ar[d, "{\Name{D}}"] \\
		\A
				\ar[r, "{\Delta}"']
				\ar[ur, Rightarrow, "\gamma"]
			&\PowObj{\D}{\A}
	\end{tikzcd}.
\end{equation}
By applying \(I^*\) and the properties of the adjunction, we get a cone over \(I^* D\)
\[
	\begin{tikzcd}
		\Terminal[\Cat[\sfrac{\E}{I}]]
				\ar[r, equal]
				\ar[d, "{\eta(\Terminal[\Cat[\sfrac{\E}{I}]])}"']
			&\Terminal[\Cat[\sfrac{\E}{I}]]
				\ar[d, equal] \\
		I^* \I
				\ar[r] \ar[d, "I^* V"']
			&\Terminal[\Cat[\sfrac{\E}{I}]]
				\ar[d, "{I^* \Name{D}}"]
				\ar[r, equal]
			&\Terminal[\Cat[\sfrac{\E}{I}]]
				\ar[d, "{\Name{I^* D}}"] \\
		I^* \A
				\ar[r, "{I^* \Delta}"']
				\ar[rr, bend right, "{\Delta}"]
				\ar[ur, Rightarrow, "I^* \gamma"]
			&I^* \PowObj{\D}{\A}
				\ar[r, "{i}"']
			&\PowObj{I^* \D}{I^* \A}
	\end{tikzcd}.
\]
Let us call \(\bar{V} = (I^* V) \eta(\Terminal[\Cat[\sfrac{\E}{I}]])\).
Then, by the universal property of \(\lim_D\), there exists a unique \(h \colon \bar{V} \to I^* \lim_D\) such that the above diagram is equal to
\[
	\begin{tikzcd}[row sep = large, column sep = large]
		\Terminal[\Cat[\sfrac{\E}{I}]]
				\ar[r, equal]
					\ar[d, bend right, "{\bar{V}}"' near start, ""{name=s, left}]
					\ar[d, bend left, "{I^* \lim_D}" near start, ""'{name=t, right}]
					\ar[from=s, to=t, Rightarrow, "{h}"]
			&\Terminal[\Cat[\sfrac{\E}{I}]]
				\ar[d, "{I^* \Name{D}}"]
				\ar[r, equal]
			&\Terminal[\Cat[\sfrac{\E}{I}]]
				\ar[d, "{\Name{I^* D}}"] \\
		I^* \A
				\ar[r, "{I^* \Delta}"']
				\ar[rr, bend right, "{\Delta}"]
				\ar[ur, Rightarrow, "I^* \pi"']
			&I^* \PowObj{\D}{\A}
				\ar[r, "{i}"']
			&\PowObj{I^* \D}{I^* \A}
	\end{tikzcd}.
\]
On both squares, we cancel the whiskering with \(i\), which is possible because \(i\) is an isomorphism as \(\E\) is locally cartesian closed.
Then, we apply \(I_!\) and compose with the counit \(\epsilon : I_! I^* \to \Id[\Cat[\E]]\) of the adjunction \(I_! \Adjoint I^*\).
If we let \(\bar{h}\) be \(\epsilon(\A) (I_! h)\), then we get that the square \eqref{eq:I-family-cones} is equal to
\[
	\begin{tikzcd}[row sep = large, column sep = large]
		\I
				\ar[dr, bend right, "{V}"', "{}"{name=s}]
				\ar[r, "{!}"]
			&\Terminal[\Cat[\E]]
				\ar[r, equal]
				\ar[to=s, Leftarrow, "\bar{h}"]
				\ar[d, "{\lim_D}", "{\ }"'{name=t, below}]
			&\Terminal[\Cat[\E]]
				\ar[d, "{\Name{D}}"] \\
		&\A
				\ar[r, "{\Delta}"']
				\ar[ur, Rightarrow, "\pi"']
			&\PowObj{\D}{\A}
	\end{tikzcd}
\]
meaning that even families of cones factorize through the limit.

Moreover, any cone \((V, \gamma)\) over \(D\) corresponds uniquely to a global section \((V, \gamma) \colon \Terminal[\Cat[\E]] \to \Cns{D}\), and a cone is a limit if it is understood in the internal logic as a terminal object for \(\Cns{D}\).
Again, it is essential that the notion is stable under pullback, otherwise it could not be expressed in the internal language.

\subsection{Completeness}

While completeness generally means to ``just'' have limits for all diagrams, in the internal setting the treatment of the notion requires special care.
The category of diagrams of shape \(\D\) over \(\A\) is evidently the functor category \(\PowObj{\D}{\A}\), and the notion of functor category is stable under pullback.
Then, we may wish to express in the internal language the fact that every such diagram has a limit cone.
However, limits are determined only up to (unique) isomorphism, and we have no non-unique existential quantifier in the logic of a cartesian closed category.
Moreover, while we have an internal category of diagrams of a certain shape, there is no internal category of diagrams of all shapes.
Thus, completeness cannot be a purely internal notion.

From an external perspective, it would look natural to call an (internal) category complete if every diagram over it admits a limit.
Again, it is necessary to reconsider this intuition, as the resulting notion would not be stable under pullback.
Indeed, even if every diagram over a category \(\A\) admits a limit, it is still possible that not every diagram over \(I^* \A\) does, for some indexing object \(I\).

\begin{definition}[completeness]\label{def:int-completeness}
	An internal category \(\A\) in \(\E\) is \Def{complete} if, for every object \(I\) of \(\E\), every diagram over \(I^* \A\) admits a limit.
\end{definition}

Such notion of completeness is what in \textcite{Hyland90discreteobjects} is called strong completeness.

The dual notion of cocompleteness is what one would expect.

\begin{definition}[cocompleteness]
	An internal category \(\A\) in \(\E\) is \Def{(strongly) cocomplete} if, for every object \(I\) of \(\E\), every diagram over \(I^* \A\) admits a colimit.
\end{definition}

At this point, it is necessary to find out whether such definition of completeness admits non-trivial examples.
The first place to look into is the category of sets, wherein the internal complete categories are precisely the small complete categories.
That means that the notion of internal completeness is compatible with the standard one, as it should be.

\begin{example}
	\label{example:complete-lattices}
	The internal complete categories in \(\SetCat\) are (up to equivalence) the (categories associated to) complete lattices.
\end{example}

While the examples in the category of sets are legitimate, they are slightly disappointing: there are no small complete categories other than lattices.
Fortunately, and somewhat surprisingly, there are remarkable examples in other ambient categories.
Maybe the most famous of these is the internal category of modest sets in the category of assemblies \autocite{hyland1988small,Hyland90discreteobjects}.

A complete category has all indexed limits, and the object of diagrams of a certain shape over it is a suitable indexing object.
Following this intuition, we can produce a functorial choice of limits in the form of a right adjoint to the diagonal functor.
This shows some interesting facts.
Firstly, that there exists an internal, uniform (as in functorial) choice of limits, as opposed to the external choice required by the definition of internal completeness.
Secondly, that our definition of completeness is consistent with the well-known characterization of completeness via adjunctions.
Finally, that the limit-choice functor comes for free from the definition of completeness, whereas, for standard categories, that would generally require the use of the axiom of choice.
It is worth remarking that the choice functor can only be defined over the object of diagrams of a given shape, as there is no internal object of all diagrams of any shape.

\begin{proposition}
\label{prop:int-limit-funct}
	If \(\A\) is a complete internal category in \(\E\), then for any internal category \(\D\) in \(\E\) there is an internal functor \(\Lim_{\D} \colon \PowObj{\D}{\A} \to \A\) which is right adjoint to the diagonal functor \(\Delta \colon \A \to \PowObj{\D}{\A}\).
\end{proposition}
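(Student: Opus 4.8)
The plan is to obtain the limit functor from the \emph{generic} diagram of shape \(\D\) in \(\A\), exploiting the fact that the object of objects of the functor category is itself a legitimate indexing object. Set \(I \DefEq \big(\PowObj{\D}{\A}\big)_0 = \Hom[\Cat[\E]](\D, \A)\), the object representing diagrams of shape \(\D\) in \(\A\) (see \Cref{prop:CatE-enriched,prop:CatE-cart-closed}). By the cartesian closed structure together with the isomorphism \(i \colon I^* \PowObj{\D}{\A} \Iso \PowObj{I^* \D}{I^* \A}\), the identity morphism \(\Id[I] \colon I \to I\) corresponds to an \(I\)-indexed family of diagrams, that is, to a single generic diagram \(G \colon I^* \D \to I^* \A\) in \(\Cat[\sfrac{\E}{I}]\). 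Every diagram over \(\A\), and indeed every indexed family of diagrams, arises by reindexing \(G\) along the morphism into \(I\) that classifies it.

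First I would apply completeness. Since \(\A\) is complete, the diagram \(G\) over \(I^* \A\) admits a universal cone \((L, \pi)\) in the sense of \Cref{def:internal-limit}. Its vertex \(L\) is a point of \(I^* \A\) in \(\Cat[\sfrac{\E}{I}]\), hence (transposing along \(I_! \Adjoint I^*\)) a morphism \(I = \big(\PowObj{\D}{\A}\big)_0 \to A_0\) in \(\E\), which I take to be the object component \((\Lim_{\D})_0\). To obtain the arrow component, I would run the same construction over \(J \DefEq \big(\PowObj{\D}{\A}\big)_1\): the generic arrow there is a natural transformation between the generic source and target diagrams; composing the universal cone of the target with this transformation yields a cone over the target diagram whose vertex is the limit of the source, and the universal property produces a unique comparison morphism. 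This defines \((\Lim_{\D})_1 \colon \big(\PowObj{\D}{\A}\big)_1 \to A_1\), and the \emph{uniqueness} clause of the universal property forces it to respect identities and composition, so that \(\Lim_{\D}\) is an internal functor. Throughout, the availability of the universal property \emph{after reindexing}---equivalently, the stability under pullback built into \Cref{def:internal-limit}---is exactly what makes these comparison morphisms exist and cohere.

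Finally I would establish the adjunction \(\Delta \Adjoint \Lim_{\D}\). The universal cones assemble, at the generic point and hence everywhere by stability, into an internal natural transformation \(\epsilon \colon \Delta \Lim_{\D} \to \Id[{\PowObj{\D}{\A}}]\), which serves as the counit. The universal property of \((\lim_D, \pi)\) for \emph{indexed families} of cones, established in the discussion preceding this proposition, shows that every cone over \(D\) with vertex \(V\)---and more generally every \(I\)-indexed family of such---factors uniquely through \(\epsilon\) via a unique comparison \(\bar{\gamma} \colon V \to \Lim_{\D} D\). This is precisely the bijection \(\Hom[\Cat[\E]](\Delta V, D) \Iso \Hom[\Cat[\E]](V, \Lim_{\D} D)\) underlying the adjunction, and because it is available for all indexing objects it upgrades to an isomorphism of the corresponding hom-objects, that is, to an \(\E\)-enriched (internal) adjunction; the triangle identities then follow from uniqueness. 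I expect the main obstacle to be bookkeeping rather than conceptual: checking that the data extracted from the generic construction are genuine morphisms of \(\E\) assembling into an honest internal functor and internal adjunction, a verification that rests entirely on the pullback-stability of universal cones and would be unavailable for the naive, non-stable notion of limit.
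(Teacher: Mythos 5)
Your proposal is correct and takes essentially the same route as the paper: the paper likewise indexes over \((\PowObj{\D}{\A})_0\) and \((\PowObj{\D}{\A})_1\), applies completeness to the generic (evaluation) diagram and to the generic source/target diagrams with the generic natural transformation between them, extracts \((\Lim_{\D})_0\) and \((\Lim_{\D})_1\) from the resulting universal cones and comparison morphisms, and takes the universal cone itself as the counit of \(\Delta \Adjoint \Lim_{\D}\). The only blemish is a wording slip---you compose the universal cone of the \emph{source} with the generic transformation to obtain a cone over the target with vertex the limit of the source---but the construction you describe is the intended one.
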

\begin{proof}
	The first step is to define the functor \(\Lim_{\D}\).
	We shall define the object and arrow components separately, each by using the definition of limit for a suitable diagram, in a suitable slice of \(E\).

	To define the object component of \(\Lim_{\D}\), consider \(\Fun{\D}{\A}\) as the indexing object and the diagram \(\epsilon \colon \Fun{\D}{\A}^*\D \to \Fun{\D}{\A}^*\A\) in \(\Cat[\sfrac{\E}{\Fun{\D}{\A}}]\), defined as
	\begin{align*}
		F \colon \Fun{\D}{\A}, d \colon D_0 &\TiC \epsilon_0(F, d) \DefEq (F, \Eval[\E](F_0, d)) \\
		F \colon \Fun{\D}{\A}, f \colon D_1 &\TiC \epsilon_1(F, f) \DefEq (F, \Eval[\E](F_1, f)).
	\end{align*}
	Then, the diagram admits a universal cone
	\[
		\pi \colon \Lim_\epsilon ! \to \epsilon \colon \Fun{\D}{\A}^*\D \to \Fun{\D}{\A}^*\A
	\]
	with vertex \(\Lim_\epsilon \colon \Terminal[\Cat[\sfrac{\E}{\Fun{\D}{\A}}]] \to \Fun{\D}{\A}^*\A\), which corresponds to an arrow \((\Lim_{\D})_0 \colon \Fun{\D}{\A} \to A_0\) yielding the object component of \(\Lim_{\D}\).

	To define the arrow component of \(\Lim_{\D}\), consider \(\Nat{\D}{\A}\) as the indexing object.
	There are diagrams \(\epsilon_{\Source}, \epsilon_{\Target} \colon \Nat{\D}{\A}^*\D \to \Nat{\D}{\A}^*\A\) in \(\Cat[\sfrac{\E}{\Nat{\D}{\A}}]\), defined as
	\begin{align*}
		\alpha \colon F \to_{\Nat{\D}{\A}} G, d \colon D_0 &\TiC
			{(\epsilon_{\Source})}_0(\alpha, d) \DefEq (\alpha, \Eval[\E] \big(F_0, d) \big) \\
		\alpha \colon F \to_{\Nat{\D}{\A}} G, f \colon D_1 &\TiC
			{(\epsilon_{\Source})}_1(\alpha, f) \DefEq (\alpha, \Eval[\E] \big(F_1, f) \big) \\
	\shortintertext{and}
		\alpha \colon F \to_{\Nat{\D}{\A}} G, d \colon D_0 &\TiC
			{(\epsilon_{\Target})}_0(\alpha, d) \DefEq (\alpha, \Eval[\E] \big(G_0, d) \big) \\
		\alpha \colon F \to_{\Nat{\D}{\A}} G, f \colon D_1 &\TiC
			{(\epsilon_{\Target})}_1(\alpha, f) \DefEq (\alpha, \Eval[\E] \big(G_1, f) \big) \\
	\intertext{and there is also a natural transformation \(\bar{\epsilon} \colon \epsilon_{\Source} \to \epsilon_{\Target}\) defined as}
		\alpha \colon F \to_{\Nat{\D}{\A}} G, d \colon D_0 &\TiC
		\bar{\epsilon}(\alpha, d) \DefEq \big(\alpha, \Eval[\E](\alpha, d) \big).
	\end{align*}
	Then, the diagram \(\epsilon_{\Target}\) admits a universal cone
	\[
		\pi_{\Target} \colon \Lim_{\epsilon_{\Target}} ! \to \epsilon_{\Target} \colon \Nat{\D}{\A}^*\D \to \Nat{\D}{\A}^*\A.
	\]
	with vertex
	\[
		\Lim_{\epsilon_{\Target}} \colon \Terminal[\Cat[\sfrac{\E}{\Nat{\D}{\A}}]] \to \Nat{\D}{\A}^*\A.
	\]
	Consider \(L' \colon \Terminal[\Cat[\sfrac{\E}{\Nat{\D}{\A}}]] \to \Nat{\D}{\A}^*\A\) given, in context \(\alpha \colon F \to_{\Nat{\D}{\A}} G, d \colon D_0\), by \(L'(\alpha) \DefEq (\alpha, (\Lim_{\D})_0(F))\), and the natural transformation \(\pi' \colon L'! \to \epsilon_{\Source}\) defined as
	\[
		\pi'_{\alpha, d} \DefEq
		\big(
			\alpha,
			(\Lim_{\D})_0(F) \xrightarrow{\pi(F, d)} \epsilon(F, d)
		\big).
	\]
	Let \(\pi'' \DefEq \bar{\epsilon} \pi' \colon L'! \to \epsilon_{\Target}\).
	Then, by the universal property of the limit, there is a unique natural transformation
	\[
		(\Lim_{\D})_1 \colon L' \to \Lim_{\epsilon_{\Target}} \colon \Terminal[\Cat[\sfrac{\E}{\Nat{\D}{\A}}]] \to \Nat{\D}{\A}^*\A
	\]
	such that \(\pi'' = \pi_{\Target} (\Lim_{\D})_1\) and whose underlying arrow in \(\E\) is a morphism \((\Lim_{\D})_1 \colon \Nat{\D}{\A} \to A_1\).
	That is the arrow component of \(\Lim_{\D}\).
	
	We shall now give the unit and counit for the adjunction \(\Delta \Adjoint \Lim_{\D}\).
	Define the unit \(\Id(\A) \to \Lim_{\D}\Delta \colon \A \to \A\) as the unique natural isomorphism yielded by the universal property of the limit.
	Define the counit \(\Delta\Lim_{\D} \to \Id(\PowObj{\D}{\A})\) as the natural transformation induced by the universal cone \(\pi\), which can be regarded as an arrow \(\Fun{\D}{\A} \to \Nat{\D}{\A}\).
	That is natural because, in context \((\alpha \colon F \to G) \colon \Nat{\D}{\A}\) and \(d \colon D_0\), the equation \(p'' = p (\Lim_{\D})_1\) implies that the square
	\[
	\begin{tikzcd}
		(\Lim_{\D})_0(F)
				\ar[r, "{\pi(F, d)}"]
				\ar[d, "{(\Lim_{\D})_1(\alpha)}"']
			&\epsilon(F, d) = \Eval[\E](F_0, d)
				\ar[d, "{\Eval[\E](\alpha, d)}"] \\
		(\Lim_{\D})_0(G)
				\ar[r, "{\pi(G, d)}"]
			&\epsilon(G, d) = \Eval[\E](G_0, d)
	\end{tikzcd}
	\]
	commutes.
	These data yield the required adjunction.
\end{proof}

Notice that such limit functor is stable under pullback, as it is the right adjoint of the diagonal functor, which is a stable notion in a locally cartesian closed category.
More precisely, \(I^* \lim_{\D} \Iso \lim_{I^* \D} \colon \A \to \PowObj{\D}{\A}\) for every indexing object \(I\) of \(\E\).

Together, the functor \(\lim_{\D}\) and the counit of the adjunction \(\Delta \lim_{\D} \to \Id[\PowObj{\D}{\A}]\) induce an internal functor \(\PowObj{\D}{\A} \to \Cns{\D}\) which is a right adjoint of the projection \(p \colon \Cns{\D} \to \PowObj{\D}{\A}\).
Likewise, we could produce the functor \(\lim_{\D}\) from the right adjoint of \(p\).
In other words, the two formulations of a limit functor in terms of a right adjoint to \(p \colon \Cns{\D} \to \PowObj{\D}{\A}\) and in terms of a right adjoint to \(\Delta \colon \A \to \PowObj{\D}{\A}\) are equivalent.
Such equivalence is just a matter of routine 2-category-theory calculations.

\subsection{Special Limits}

The strength of the notion of internal completeness is constrained by the shapes of the diagrams we can build in \(\E\).
For example, if \(\E\) has no coproduct \(\Terminal + \Terminal\), we are unable to build a diagram with a pair of objects.
That means that an internal category may in principle be complete, but not have binary products.
Even more strikingly, if \(\E\) does not have an initial object, we are unable to build the degenerate diagram having the terminal object as its limit.
Fortunately, it is possible to express the concepts of terminal object, internal binary product and equalizer independently of the notion of internal limit.

Firstly, we treat the terminal object.

\begin{definition}[internal terminal object]
	An internal category \(\A\) in \(\E\) has terminal object if the functor \(! \colon \A \to \Terminal[\Cat[\E]]\) has a right adjoint.
\end{definition}

\begin{proposition}
	If \(\E\) has initial object and \(\A\) is a complete internal category in \(\E\), then \(\A\) has terminal object.
\end{proposition}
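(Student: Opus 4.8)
The plan is to exhibit the terminal object of \(\A\) as the limit of the empty diagram, whose shape is the empty internal category; the assumption that \(\E\) has an initial object \(\Initial\) is precisely what makes this shape available internally. Concretely, I would take \(\D\) to be the discrete category \(\Dis\Initial\) and feed it into the limit functor of \Cref{prop:int-limit-funct}.

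First I would check that \(\Dis\Initial\) is the initial object of \(\Cat[\E]\). Since \(\Dis \Adjoint U\) by \Cref{prop:dis-U-ind}, the functor \(\Dis\) is a left adjoint and so preserves colimits, in particular the initial object; hence \(\Dis\Initial = \Initial[\Cat[\E]]\).

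Next I would identify the functor category \(\PowObj{\Dis\Initial}{\A}\). By \Cref{prop:CatE-cart-closed} the category \(\Cat[\E]\) is cartesian closed, so for every \(\X\) there is a natural bijection \(\Hom[\Cat[\E]](\X, \PowObj{\Dis\Initial}{\A}) \Iso \Hom[\Cat[\E]](\X \Prod[\Cat[\E]] \Dis\Initial, \A)\). As a left adjoint, the functor \(\X \Prod[\Cat[\E]] (\mathhyphen)\) preserves the initial object, so \(\X \Prod[\Cat[\E]] \Dis\Initial \Iso \Dis\Initial\); the right-hand hom is therefore a singleton, and by Yoneda \(\PowObj{\Dis\Initial}{\A} \Iso \Terminal[\Cat[\E]]\). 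Under this isomorphism the diagonal functor \(\Delta \colon \A \to \PowObj{\Dis\Initial}{\A}\) becomes a functor into the terminal internal category, hence coincides with the unique functor \({!} \colon \A \to \Terminal[\Cat[\E]]\).

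Finally, applying \Cref{prop:int-limit-funct} with \(\D \DefEq \Dis\Initial\), completeness of \(\A\) supplies an internal functor \(\Lim_{\Dis\Initial} \colon \PowObj{\Dis\Initial}{\A} \to \A\) right adjoint to \(\Delta\). Transporting along the isomorphism above turns this into a right adjoint to \({!}\), which is exactly the condition for \(\A\) to have a terminal object. I expect the only real subtlety to be the collapse \(\PowObj{\Dis\Initial}{\A} \Iso \Terminal[\Cat[\E]]\) together with the matching \(\Delta = {!}\); this is a consequence of the strictness of the initial object in the cartesian closed category \(\Cat[\E]\) and requires no serious computation. The substantive content—extracting a right adjoint to the diagonal from completeness—has already been established in \Cref{prop:int-limit-funct}.
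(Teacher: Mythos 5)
Your proposal is correct and takes essentially the same approach as the paper: both use the initial internal category (the paper's \(\Initial[\Cat[\E]]\), your \(\Dis\Initial\)) as the diagram shape, identify \(\PowObj{\Initial[\Cat[\E]]}{\A}\) with \(\Terminal[\Cat[\E]]\) so that \(\Delta\) becomes \({!}\), and invoke \Cref{prop:int-limit-funct} for the right adjoint. You merely spell out the collapse \(\PowObj{\Initial[\Cat[\E]]}{\A} \Iso \Terminal[\Cat[\E]]\) that the paper asserts without comment.
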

\begin{proof}
	Let \(\Initial[\Cat[\E]]\) be the initial object in \(\Cat[\E]\), built from the initial object \(\Initial\) of \(\E\).
	Then, \(\Delta \colon \A \to \PowObj{\Initial[\Cat[\E]]}{\A}\) is \(! \colon \A \to \Terminal[\Cat[\E]]\), and the limit functor of \(\A\) from \Cref{prop:int-limit-funct} yields the right adjoint.
\end{proof}

Secondly, we turn our attention to binary products.

\begin{definition}[internal binary product]
	An internal category \(\A\) in \(\E\) has binary products if the functor \(\Delta \colon \A \to \A \Prod[\Cat[\E]] \A\) has a right adjoint.
\end{definition}

It turns out that, when one can give a natural definition in terms of diagrams, such definition coincides with the general one.

\begin{proposition}
	If \(\E\) has binary coproducts and \(\A\) is a complete internal category in \(\E\), then \(\A\) has binary products.
\end{proposition}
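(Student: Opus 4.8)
The plan is to mirror the proof just given for terminal objects: I would exhibit a shape category whose associated constant-diagram functor from \Cref{prop:int-limit-funct} coincides, up to isomorphism, with the product diagonal $\Delta \colon \A \to \A \Prod[\Cat[\E]] \A$ appearing in the definition of binary products. The natural candidate is the discrete two-object category $\D \DefEq \Dis(\Terminal + \Terminal)$, which is available precisely because $\E$ has the binary coproduct $\Terminal + \Terminal$. Since $\A$ is complete, \Cref{prop:int-limit-funct} then supplies a right adjoint $\Lim_{\D}$ to $\Delta \colon \A \to \PowObj{\D}{\A}$, and transporting it across the identification $\PowObj{\D}{\A} \Iso \A \Prod[\Cat[\E]] \A$ yields the desired right adjoint to the product diagonal.

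The key steps, in order, are as follows. First, I would observe that $\Dis \colon \E \to \Cat[\E]$, being a left adjoint by \Cref{prop:dis-U-ind}, preserves coproducts, so that $\Dis(\Terminal + \Terminal) \Iso \Dis(\Terminal) + \Dis(\Terminal) \Iso \Terminal[\Cat[\E]] + \Terminal[\Cat[\E]]$, using that the discrete category on the terminal object is the terminal internal category. Second, since $\Cat[\E]$ is cartesian closed by \Cref{prop:CatE-cart-closed}, the exponential $\A^{(\mathhyphen)}$ carries the coproduct in its exponent to a product, and the terminal category, being the monoidal unit, satisfies $\A^{\Terminal[\Cat[\E]]} \Iso \A$; combining these gives
\[
	\PowObj{\D}{\A} \Iso \A^{\Terminal[\Cat[\E]] + \Terminal[\Cat[\E]]} \Iso \A^{\Terminal[\Cat[\E]]} \Prod[\Cat[\E]] \A^{\Terminal[\Cat[\E]]} \Iso \A \Prod[\Cat[\E]] \A.
\]
Third, I would check that under this chain of isomorphisms the constant-diagram functor $\Delta \colon \A \to \PowObj{\D}{\A}$ becomes the product diagonal $\A \to \A \Prod[\Cat[\E]] \A$; this is immediate on points (a constant diagram on the two-point shape is a pair of equal objects) and holds in general by naturality of the cartesian-closed isomorphisms. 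Finally, applying \Cref{prop:int-limit-funct} and conjugating $\Lim_{\D}$ by the isomorphism above produces a right adjoint to the diagonal, which is exactly what it means for $\A$ to have binary products.

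I expect the only delicate point to be the third step: verifying that the isomorphism $\PowObj{\D}{\A} \Iso \A \Prod[\Cat[\E]] \A$ genuinely intertwines the two diagonals, rather than merely identifying the two functor categories abstractly. Everything else is a direct reuse of the completeness hypothesis through the limit functor, exactly as in the terminal-object case, with $\Dis(\Terminal + \Terminal)$ playing the role that $\Initial[\Cat[\E]]$ played there.
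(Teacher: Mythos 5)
Your proposal is correct and follows essentially the same route as the paper: the paper's proof takes the shape category to be \(\DoubleCat = \Terminal[\Cat[\E]] +^{\Cat[\E]} \Terminal[\Cat[\E]]\), identifies \(\PowObj{\DoubleCat}{\A}\) with \(\A \Prod[\Cat[\E]] \A\) so that the constant-diagram functor becomes the product diagonal, and invokes the limit functor of \Cref{prop:int-limit-funct}. Your construction of the shape as \(\Dis(\Terminal + \Terminal)\) and your explicit verification of the isomorphisms merely spell out details the paper leaves implicit.
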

\begin{proof}
	Consider the internal category \(\DoubleCat = \Terminal[\Cat[\E]] +^{\Cat[\E]} \Terminal[\Cat[\E]]\).
	Then, \(\Delta \colon \A \to \PowObj{\DoubleCat}{\A}\) is \(\Delta \colon \A \to \A \Prod[\Cat[\E]] \A\), and the limit functor of \(\A\) from \Cref{prop:int-limit-funct} yields the right adjoint.
\end{proof}

Finally, we treat the case of equalizers.
Consider the object \({(\Parallel{\A})}_0\) of pairs of parallel arrows in \(\A\), defined by means of the internal language as the subobject of \(A_1 \Prod A_1\) given, in context \((f, g) \colon A_1 \Prod A_1\), by \(\Source[\A](f) = \Source[\A](g)\) and \(\Target[\A](f) = \Target[\A](g)\).
Analogously, there is an object of commutative squares between pairs of parallel arrows in \(\A\), which is the subobject of \({(\Parallel{\A})}_0 \Prod {(\Parallel{\A})}_0 \Prod A_1 \Prod A_1\) given, in context \(( (f, g), (f', g'), h_0, h_1 )\), by
\[
	\begin{tikzcd}[row sep = large]
		X_0
			\ar[r, "{f}", shift left = 2]
			\ar[r, "{g}"', shift right = 2]
			\ar[d, "{h_0}"']
		&X_1
			\ar[d, "{h_1}"] \\
		Y_0
			\ar[r, shift left = 2, "{f'}"]
			\ar[r, shift right = 2, "{g'}"']
		&Y_1
	\end{tikzcd}
\]
where \(h_1 \Comp[\A] f = f' \Comp[\A] h_0\) and \(h_1 \Comp[\A] g = g' \Comp[\A] h_0\).
Thus we get the category \(\Parallel{\A}\) of parallel arrows of \(\A\).
There is also a delta functor \(\Delta \colon \A \to \Parallel{\A}\) sending an object of \(\A\) into the pair given by its identity arrow (twice).

\begin{definition}[internal equalizer]
	An internal category \(\A\) in \(\E\) has equalizers if the functor \(\Delta \colon \A \to \Parallel{\A}\) has a right adjoint.
\end{definition}

Again, when we can give a definition in terms of diagrams which coincides with the general definition.

\begin{proposition}
	If \(\E\) has binary coproducts and \(\A\) is a complete internal category in \(\E\), then \(\A\) has equalizers.
\end{proposition}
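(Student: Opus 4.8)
The plan is to follow the template of the two preceding propositions: exhibit a shape category whose category of functors into \(\A\) recovers \(\Parallel{\A}\) together with its diagonal, and then read off the right adjoint from the limit functor of \Cref{prop:int-limit-funct}.

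First I would build the \emph{walking parallel pair} \(\ParallelCat\), i.e.\ the free internal category on the graph with two objects and two parallel edges between them. Since \(\E\) has binary coproducts and a terminal object, set \({(\ParallelCat)}_0 \DefEq \Terminal + \Terminal\) (two objects, \(0\) and \(1\)) and \({(\ParallelCat)}_1 \DefEq \Terminal + \Terminal + \Terminal + \Terminal\) (four arrows: the two identities together with two parallel arrows from \(0\) to \(1\)). The source, target, identity and composition maps are forced by the coproduct structure; note that the only composable pairs involve an identity, so \(\Comp[\ParallelCat]\) is completely determined and the category axioms reduce to routine coproduct calculations.

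The key step is then to produce an isomorphism \(\PowObj{\ParallelCat}{\A} \Iso \Parallel{\A}\) of internal categories identifying the two diagonal functors. Unlike the discrete shapes \(\Initial[\Cat[\E]]\) and \(\DoubleCat\) used before, \(\ParallelCat\) carries genuine non-identity arrows, so this requires an argument. By freeness, an internal functor \(\ParallelCat \to \A\) is determined by the images of the two generating arrows, subject only to the constraint that they share a common source and a common target, which is exactly the subobject of \(A_1 \Prod A_1\) defining \({(\Parallel{\A})}_0\). Likewise, a natural transformation between two such functors is determined by its components at \(0\) and \(1\), and naturality on the two generating arrows is precisely the pair of commuting squares defining \({(\Parallel{\A})}_1\). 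Checking in the internal language that this correspondence respects identities and composition, and that it sends \(\Delta \colon \A \to \PowObj{\ParallelCat}{\A}\) to \(\Delta \colon \A \to \Parallel{\A}\) (both sending an object to its identity arrow taken twice), yields the desired isomorphism.

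With this in hand the conclusion is immediate: since \(\A\) is complete, \Cref{prop:int-limit-funct} supplies a right adjoint \(\Lim_{\ParallelCat} \colon \PowObj{\ParallelCat}{\A} \to \A\) to the diagonal, and transporting it along \(\PowObj{\ParallelCat}{\A} \Iso \Parallel{\A}\) gives a right adjoint to \(\Delta \colon \A \to \Parallel{\A}\), which by definition means \(\A\) has equalizers. I expect the main obstacle to be the second step, namely rigorously establishing \(\PowObj{\ParallelCat}{\A} \Iso \Parallel{\A}\) in the internal language, precisely because here, in contrast to the terminal-object and binary-product cases, the shape category has non-trivial morphisms, so freeness must be invoked to pin down both the functors and the natural transformations between them.
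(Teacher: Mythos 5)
Your proposal is correct and follows exactly the paper's own route: the paper likewise takes \(\ParallelCat\) to be the internal category with two objects and two parallel arrows, identifies \(\Delta \colon \A \to \PowObj{\ParallelCat}{\A}\) with \(\Delta \colon \A \to \Parallel{\A}\), and invokes the limit functor of \Cref{prop:int-limit-funct}. The only difference is that you spell out the coproduct construction of \(\ParallelCat\) and the identification \(\PowObj{\ParallelCat}{\A} \Iso \Parallel{\A}\), which the paper leaves implicit.
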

\begin{proof}
	Let \(\ParallelCat\) be the internal category in \(\E\) with two objects and two parallel arrows between them.
	Then, \(\Delta \colon \A \to \PowObj{\ParallelCat}{\A}\) is \(\Delta \colon \A \to \Parallel{\A}\), and the limit functor of \(\A\) from \Cref{prop:int-limit-funct} yields the right adjoint.
\end{proof}

Notice that the previous notions are all stable under pullback, as they are defined as the right adjoint of the diagonal functor, which is a stable notion in a locally cartesian closed category.
Moreover, we could similarly define the dual special colimits: initial object, binary sum and coequalizer.

\subsection{Completeness and Cocompleteness}

It is part of the folklore of category theory that, were it not for issues of size, completeness and cocompleteness would be equivalent notions.
For example, this idea plays a manifest role in the proof of the General Adjoint Functor Theorem.
In the context of internal categories, the size issues disappear and there is an equivalence.
This important fact is alluded to in \textcite{hyland1988small}, but there is no written account of which we are aware.
Hence, we think it useful to provide a detailed proof in a modern categorical style.

The proof of the above fact is a consequence of the following propositions.

\begin{proposition}
	\label{prop:initial-limit-identity}
	Let \(\A\) be a category in \(\E\).
	If the identity \(\Id[\Cat[\E]](\A) \colon \A \to \A\) has a limit, then that limit is an initial object in \(\A\).
\end{proposition}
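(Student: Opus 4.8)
The plan is to carry out, inside the internal language of \(\E\), the classical argument that a limit of the identity functor is an initial object, and then to note that stability of the universal cone promotes the conclusion from an external to a genuinely internal statement. Write \((V, \pi)\) for the given universal cone over \(\Id[\Cat[\E]](\A) \colon \A \to \A\), with vertex \(V \colon \Terminal[\Cat[\E]] \to \A\); its object component is a point \(v\) of \(A_0\), and \(\pi \colon \Delta V \to \Id[\A]\) amounts, in the internal language, to an arrow \(A_0 \to A_1\) satisfying
\[
	a \colon A_0 \TiC \pi(a) \colon v \to a,
	\qquad
	f \colon b \to_{\A} b' \TiC f \Comp[\A] \pi(b) = \pi(b').
\]
The second equation is simply the naturality of the cone (the constant functor \(\Delta V\) sends every arrow to \(\Id[\A](v)\)).

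First I would prove that \(\pi(v) = \Id[\A](v)\). Substituting \(a := v\), the arrow \(\pi(v) \colon v \to v\) is an endomorphism of the cone \((V, \pi)\): instantiating the naturality law at the arrow \(\pi(a) \colon v \to a\) (so that \(b\) becomes \(v\) and \(b'\) becomes \(a\)) yields \(\pi(a) \Comp[\A] \pi(v) = \pi(a)\) for every \(a \colon A_0\), which is precisely the condition for \(\pi(v)\) to be a morphism of cones \((V, \pi) \to (V, \pi)\). Since \(\Id[\A](v)\) is trivially such a morphism as well, and \((V, \pi)\) is internally a terminal object of \(\Cns{\Id[\A]}\), the uniqueness of morphisms into a terminal object forces \(\pi(v) = \Id[\A](v)\).

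With this in hand, initiality is immediate: for any \(g \colon v \to a\), the naturality law instantiated at \(g\) gives \(g = g \Comp[\A] \pi(v) = \pi(a)\), so \(\pi(a)\) is the unique arrow \(v \to a\). Phrased internally, this is exactly the (purely equational, hence existential-quantifier-free) assertion
\[
	a \colon A_0 \TiC \pi(a) \colon v \to a,
	\qquad
	g \colon v \to_{\A} a \TiC g = \pi(a) \colon A_1,
\]
which says that \(v\) is an initial object of \(\A\); equivalently, \(\pi\) is the counit of an adjunction \(V \Adjoint {!} \colon \A \to \Terminal[\Cat[\E]]\), so \(\A\) has an initial object in the sense dual to the definition of internal terminal object. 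The one step demanding care — and the only place where internality genuinely enters — is that the universal property used to identify \(\pi(v)\) with \(\Id[\A](v)\) must be the \emph{stable} one of \Cref{def:internal-limit}; but this costs nothing, since by definition the universal cone remains terminal after pulling back along any \(I \to \Terminal[\Cat[\E]]\). The argument above, being interpreted in the internal language, therefore holds uniformly in every slice and exhibits \(V\) as an authentically internal initial object rather than a merely external one.
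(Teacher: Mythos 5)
Your argument is correct and is essentially the paper's own proof, transcribed from the 2-categorical calculus into the internal language: the identity \(\pi_a \circ \pi_v = \pi_a\) that you obtain by instantiating naturality at \(\pi_a\) is exactly what the paper derives from the interchange law (using that the whiskering of the cone through the terminal category is an identity), and both arguments then invoke the uniqueness clause of the universal property to force \(\pi_v = \mathrm{id}_v\). The only cosmetic difference is that the paper packages the conclusion as the triangle identity \(\pi I = \mathrm{id}_I\) for the adjunction \(I \dashv {!}\), whereas you additionally spell out the resulting equation \(g = \pi_a\) for every \(g \colon v \to a\).
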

\begin{proof}
	Let the functor \(I \colon \Terminal[\Cat[\E]] \to \A\) be the limit of \(\Id[\Cat[\E]](\A)\), and the natural transformation \(\pi \colon I !_{\A} \to \Id[\Cat[\E]](\A) \colon \A \to \A\) be its universal cone.
	We want to prove that \(I\) is the left adjoint of \(!_{\A}\), with \(\pi\) as the counit of the adjunction (the unit being the obvious degenerate natural transformation).
	Then, the only non-trivial triangular equation (after canceling the identity terms) requires us to prove that \(\pi I = \Id(I)\).
	Observe that the interchange law of composition applied to the natural transformations
	\[
		\begin{tikzcd}[column sep = small]
			&|[alias=Z]| \Terminal[\Cat[\E]]
				\ar[dr, "{I}"] \\
			\A
				\ar[ur, "!"]
				\ar[rr, equal, ""{name=C, above}]
				\ar[from=Z, to=C, Rightarrow, "{\pi}" description]
			&{\ }
			&\A
				\ar[dr, "!"']
				\ar[rr, equal, ""{name=D, below}]
			&{\ }
			&\A \\
			&&&\Terminal[\Cat[\E]]
				\ar[ur, "{I}"']
				\ar[to=D, Rightarrow, "{\pi}" description]
		\end{tikzcd}
	\]
	says that \(\pi = \pi ( \pi I! )\) (given that \(I! \pi\) is an identity as it factors through \(\Terminal[\Cat[\E]]\)).
	Then, by the universal property of the limit, there is a unique \(h \colon I \to I\) such that \(\pi (h !) = \pi\), which is obviously \(\Id[I]\).
	But, by the previous observation, also \(\pi I\) features such property, and then it must be that \(\pi I = \Id[I]\), which is the required triangular equation.
\end{proof}

\begin{proposition}
	\label{prop:cocone-complete}
	Let \(\A\) be a complete category in \(\E\) and \(D \colon \D \to \A\) a diagram over it.
	Then, \(\CoCns{D}\) is complete too.
\end{proposition}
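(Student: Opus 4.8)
The plan is to show that the projection $P \colon \CoCns{D} \to \A$ to the vertex (the downward leg of the lax pullback defining $\CoCns{D}$) creates limits, and then to feed in the completeness of $\A$. Concretely, given a diagram $G \colon \mathbf{G} \to \CoCns{D}$ over an arbitrary shape $\mathbf{G}$, I would first form $PG \colon \mathbf{G} \to \A$ and use completeness of $\A$ to obtain a universal cone over $PG$ with vertex $L$, whose components I denote $\lambda_g \colon L \to V_g$, where $V_g$ is the vertex of the cocone $G(g) = (V_g, \gamma_g)$. The remaining task is to lift $L$ to a cocone, i.e.\ to equip it with a natural transformation $\delta \colon \Name{D} \to \Delta L$ making $(L, \delta)$ an object of $\CoCns{D}$ and each $\lambda_g$ a morphism of cocones.

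To build $\delta$ I would use that $\Delta \colon \A \to \PowObj{\D}{\A}$ preserves limits: since limits in the internal functor category are computed pointwise, the constant diagram $\Delta L$ is the limit of the diagram $\Delta PG$ sending $g$ to $\Delta V_g$. Now the objects of $\mathbf{G}$ supply the cocones $\gamma_g \colon \Name{D} \to \Delta V_g$, and a morphism $g \to g'$ in $\mathbf{G}$, being a morphism of cocones, satisfies $\Delta h \Comp \gamma_g = \gamma_{g'}$; expressed in the internal language, this exhibits $\Name{D}$ together with the family $(\gamma_g)$ as a cone over $\Delta PG$. By the universal property of $\Delta L$ it factors through a unique $\delta \colon \Name{D} \to \Delta L$ with $\Delta \lambda_g \Comp \delta = \gamma_g$, which says exactly that each $\lambda_g$ is a morphism of cocones $(L, \delta) \to (V_g, \gamma_g)$. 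Thus $\lambda$ becomes a cone over $G$ in $\CoCns{D}$ with vertex $(L, \delta)$, and its universal property is inherited from that of $L$: a competing cone projects under $P$ to a cone over $PG$, factors uniquely through $L$ by some $u$, and $u$ is automatically a morphism of cocones by the uniqueness clause of the universal property of $\Delta L$.

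Finally I would address stability, which is what upgrades the construction from a terminal object of the external category of cones to a limit in the internal sense of \Cref{def:internal-limit}. Here the key inputs are that reindexing commutes with the formation of cocone categories, $I^* \CoCns{D} \Iso \CoCns{I^* D}$ — proved exactly as for $\Cns{D}$, since $I^*$ preserves the terminal object, $\Delta$, and $\Name{D}$, and the canonical comparison is an isomorphism because $\E$ is locally cartesian closed — and that reindexing preserves completeness of $\A$. Because the universal cone supplied by $\A$ is already stable under pullback and $\Delta$, $\Name{D}$ commute with $I^*$, the entire lifting commutes with every change of base, so the cone over $G$ stays universal in each slice; this is precisely the content of $\CoCns{D}$ being complete.

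The main obstacle I anticipate is not the comma-category bookkeeping, which is routine, but ensuring the lifted cone is universal in the stable sense. This rests on three facts holding uniformly: the pointwise computation of limits in $\PowObj{\D}{\A}$, the preservation of limits by $\Delta$, and the compatibility $I^* \CoCns{D} \Iso \CoCns{I^* D}$. The care lies in checking that each of these is respected by every reindexing $I^*$, so that the universal property of $(L, \delta)$ is transported intact to all slices rather than only established at the top level.
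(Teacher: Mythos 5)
Your proof takes essentially the same route as the paper's: project the diagram to the complete ambient category along the vertex projection of \(\CoCns{D}\), form its limit there, lift the cocone structure to the limit vertex using that the diagonal functor preserves limits, and verify that the resulting cone is universal. The paper packages the lifting through the one- and two-dimensional universal properties of the lax pullback rather than componentwise, and leaves stability under reindexing implicit where you spell it out, but the mathematical content is the same.
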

\begin{proof}
	Let \(D' \colon \D' \to \CoCns{D}\) be a diagram over \(\CoCns{D}\).
	Observe	that, by composition with the projection \(T \colon \CoCns{D} \to \A\), we have a diagram \(T D'\) over \(\A\).
	By hypothesis, \(T D'\) has a limit \(\lim_{T D'} \colon \Terminal[\Cat[\E]] \to \A\) with universal cone \(\pi \colon \Delta \lim_{T D'} \to \Name{T D'}\).
	The situation is as follows.
	\[
	\begin{tikzcd}
			\D'
				\ar[r, "{D'}"]
				\ar[d]
			&\CoCns{D}
				\ar[r] \ar[d, "{T}"']
				\ar[dr, phantom, "\lrcorner"{description, very near start}]
			&\Terminal[\Cat[\E]]
				\ar[d, "{\Name{D}}"] \\
			\Terminal[\Cat[\E]]
				\ar[r, "{\lim_{T D'}}"', {name=L}]
				\ar[ur, Rightarrow, "{\pi}" description]
			&\A
				\ar[r, "{\Delta}"']
				\ar[ur, Leftarrow, "{\gamma}" description]
			&\PowObj{\D}{\A}
	\end{tikzcd}
	\]
	Because \(\Delta\) preserves limits, \(\lim_{\Delta T D'}\) is (isomorphic to) \(\Delta \lim_{T D'}\) and its universal cone is (isomorphic to) \(\Delta \pi\).
	But \(\gamma D' \colon \Name{D}! \to \Delta T D'\) is a cone over \(\Delta T D'\) with vertex \(\Name{D}\), so, by the universal property of the limit, there is a unique natural transformation \(h \colon \Name{D} \to \Delta \lim_{T D'}\) such that \((\Delta \pi) ( h ! ) = \gamma D'\).
	Then, by the universal property of the lax pullback, there is a unique \(L \colon \Terminal[\Cat[\E]] \to \CoCns{D}\) such that \(T L \Iso \lim_{T D'}\) and (up to this isomorphism) \(\gamma L = h\).
	Moreover, observe that by the defining properties of \(h\) and \(L\) it holds \(\gamma D' = (\Delta \pi) ( \gamma L! )\).
	Then, by the 2-dimensional universal property of the lax pullback, there is a unique \(p \colon L ! \to D'\) such that \(T p = \pi\).
	
	We claim \((L, p)\) is a limit for \(D'\).
	We need to show that, for any other cone \((L', p')\) on \(D'\), there is a unique \(\beta \colon L' \to L\) such that \(p (\beta !) = p'\).
	
	Let's produce \(\beta\).
	To begin with, notice that \(T p' \colon T L' ! \to T D'\) is a cone on \(T D'\) and thus, by the universal property of the limit, there exists a unique \(\kappa \colon T L' \to \lim_{T D'}\) such that \(\pi ( \kappa ! ) = T p'\).
	Then, observe that \((\Delta \kappa) (\gamma L')\) is \(h\), as it has the same unique property:
	\[
		\begin{split}
			\MoveEqLeft (\Delta \pi) \Big( \big( (\Delta \kappa) (\gamma L') \big) ! \Big) \\
			&= (\Delta \pi) (\Delta \kappa !) (\gamma L'!) \\
			&= \Big( \Delta \big( \pi (\kappa !) \big) \Big) (\gamma L'!) \\
		\text{(by the unique property of \(\kappa\))}
			&= ( \Delta T p' ) ( \gamma L'! ) \\
		\text{(by the interchange law of composition)}
			&= ( \gamma D' ) ( \Name{D} ! p' ) \\
		\text{(because \(\Name{D} ! p'\) factors through \(\Terminal[\Cat[\E]]\))}
			&= \gamma D'.
		\end{split}
	\]
	Finally, by the 2-dimensional universal property of the lax pullback, there is a unique \(\beta \colon L' \to L\) such that \(T \beta = \kappa\).
	
	Now we need to show that \(p'\) factorizes through \(p\), i.e.\ that \(p (\beta !) = p'\).
	Notice that \(\gamma D' = ( \Delta ( \pi ( \kappa ! ) ) ) (\gamma L'!)\) (it is the same calculation as before).
	Then, there is a unique \(q \colon L'! \to D'\) such that \(T q = \pi ( \kappa ! )\).
	But both \(p (\beta !)\) and \(p'\) have this unique property.
	Indeed, \(T ( p (\beta !) ) = (T p) (T \beta !)\) and that, because of the unique properties of \(p\) and \(\beta\), is \(\pi (\kappa !)\); and for \(p'\) it holds because the unique property of \(\kappa\).
	But then, \(p (\beta !) = p'\).
	
	To conclude, let's prove that \(\beta\) is unique with its property.
	Let \(\beta' \colon L' \to L\) be such that \(p (\beta' !) = p'\).
	To prove that \(\beta'\) is \(\beta\), we need to show that it has the same unique property, i.e.\ that \(T \beta' = \kappa\).
	Likewise, to prove that \(T \beta'\) is \(\kappa\), we need show that it has the same unique property, i.e.\ that \(\pi ( T \beta' ! ) = T p'\).
	But that follows from \((T p) (T \beta' !) = T p'\), by applying the unique property of \(p\).
\end{proof}

We can now finally prove the result.

\begin{theorem}
\label{thm:complete-cocomplete}
	Let \(\A\) be a category in \(\E\).
	Then, \(\A\) is complete if and only if it is cocomplete.
\end{theorem}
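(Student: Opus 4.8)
The plan is to prove only that completeness implies cocompleteness and then to deduce the converse by duality. Since \({(\mathhyphen)}^{\Op}\) is a self-adjoint automorphism of \(\Cat[\E]\) (\Cref{prop:op-self-adj}) and it sends a diagram \(D \colon \D \to \A\) to \(D^{\Op} \colon \D^{\Op} \to \A^{\Op}\), identifying cocones over \(D\) with cones over \(D^{\Op}\), an internal category \(\A\) is cocomplete exactly when \(\A^{\Op}\) is complete, and complete exactly when \(\A^{\Op}\) is cocomplete (the passage to opposites commuting with reindexing, so that the indexed definitions match). Thus, once the forward implication is available for every internal category, applying it to \(\A^{\Op}\) yields the backward implication for \(\A\).

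First I would reduce the forward implication to a statement about a single diagram. Completeness is stable under reindexing: for any \(i \colon J \to I\) one has \(i^*(I^* \A) \Iso J^* \A\), so that \(I^* \A\) is a complete internal category in \(\sfrac{\E}{I}\) whenever \(\A\) is complete in \(\E\). Consequently, cocompleteness of \(\A\) will follow once I show that a complete internal category, in an arbitrary locally cartesian closed ambient category, admits a colimit for every diagram over it; I then apply this to \(I^* \A\) in \(\sfrac{\E}{I}\) for each indexing object \(I\).

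The heart of the argument realises the folklore principle that a colimit of \(D \colon \D \to \A\) is a limit formed in the category of cocones. By \Cref{def:internal-colimit}, such a colimit is an internally initial object of \(\CoCns{D}\). Now \Cref{prop:cocone-complete} ensures that \(\CoCns{D}\) is itself complete, so the limit functor of \Cref{prop:int-limit-funct} provides a limit of the identity functor \(\Id[\Cat[\E]](\CoCns{D})\); and by \Cref{prop:initial-limit-identity} this limit is an initial object of \(\CoCns{D}\), that is, a universal cocone over \(D\).

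The step demanding the most care is verifying that the object so obtained is initial in the \emph{internal} sense, hence stable under pullback, as \Cref{def:internal-colimit} requires. This is where local cartesian closedness enters decisively: the limit functor of \Cref{prop:int-limit-funct} is stable because it is right adjoint to the diagonal, while the proof of \Cref{prop:initial-limit-identity} exhibits the limit of the identity as a left adjoint to the unique functor \(\CoCns{D} \to \Terminal[\Cat[\E]]\). Because each reindexing functor \(i^*\) is a \(2\)-functor and therefore preserves adjunctions, this left-adjoint structure---and with it the initiality---survives every pullback, so the cocone is universal in the stable sense of \Cref{def:internal-colimit}. Collecting these facts over all indexing objects \(I\) establishes cocompleteness of \(\A\), and the duality above completes the proof.
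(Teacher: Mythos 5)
Your proposal is correct and follows essentially the same route as the paper: one direction by duality, and for the other direction a colimit of a diagram \(D\) is obtained as the initial object of the category of cocones over \(D\), produced as the limit of the identity functor via \Cref{prop:cocone-complete} and \Cref{prop:initial-limit-identity}. The only difference is that you spell out the stability-under-reindexing considerations that the paper leaves implicit, which is a reasonable elaboration rather than a different argument.
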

\begin{proof}
	By duality, it is enough to show only one direction of the equivalence.

	Assume \(\A\) is complete, and let's prove that it is also cocomplete.
	Consider the diagram \(D \colon \D \to \A\).
	A colimit of \(D\) is by definition an initial object in \(\CoCns{D}\), which is, by \Cref{prop:initial-limit-identity}, the identity on \(\CoCns{D}\).
	Such limit exists because, by \Cref{prop:cocone-complete}, \(\CoCns{D}\) is complete.
\end{proof}

The equivalence of completeness and cocompleteness is, essentially, a generalization of the well-known fact that complete lattices are also cocomplete.
We can now retrieve such a fact by recalling that complete lattices are complete internal categories in \(\SetCat\) (\Cref{example:complete-lattices}) and by applying \Cref{thm:complete-cocomplete}.

Another immediate consequence is that the category of modest sets in the category of assemblies is cocomplete.

\subsection{Adjoint Functor Theorem}

The adjoint functor theorem is a fundamental result of category theory, dating back to \textcite{freyd1964abelian} (appearing in the exercise section of Chapter 3), which states that, under suitable conditions, a limit-preserving functor has a right adjoint.
More appropriately, we should talk about the family of adjoint functor theorems, as there are many alternative such conditions that, fundamentally, deal with completeness and size issues.
We shall show that many complications fade away in the internal context, where we only need the domain of the functor to be a complete category.

Let \(F \colon \A \to \B\) be a functor between internal categories in \(\E\).
Observe that there is a post-composition functor \(\PowObj{\D}{F} \colon \PowObj{\D}{\A} \to \PowObj{\D}{\B}\)
such that, for any diagram \(D \colon \D \to \A\), we have that \(\PowObj{\D}{F} \Name{D} = \Name{F D}\)
and \(\PowObj{\D}{F} \Delta = \Delta F\).

We shall define a notion of continuity for internal functors.

\begin{definition}[continuous functor]
	Let \(F \colon \A \to \B\) be a functor between internal categories in \(\E\).
	We say \(F\) is \Def{continuous} if, for any object \(I\) and every diagram \(D \colon \D \to I^*\A\) admitting a universal cone
	\(\pi \colon \Delta \lim_D \to \Name{D}\), then \(\PowObj{\D}{F} \pi \colon \Delta F \lim_D \to \Name{F D}\)
	is a universal cone for \(FD\), and thus \(F \lim_D\) is a limit for \(FD\).
\end{definition}

We choose to name such functors as ``continuous'' rather then, more descriptively, as ``limit-preserving,'' since, in the internal context, all diagrams are small.

We can now prove an adjoint functor theorem for internal categories.

\begin{theorem}[adjoint functor theorem]
	Let \(\B\) be a complete internal category in \(\E\) and \(R \colon \B \to \A\) be a continuous functor between internal categories in \(\E\).
	Then, \(R\) is a right adjoint, i.e., it has a left adjoint \(L \colon \A \to \B\).
\end{theorem}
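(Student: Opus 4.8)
The plan is to construct $L$ by exhibiting, uniformly over the objects of $\A$, a universal arrow into $R$; equivalently, to show that the comma category of $R$ under the generic object of $\A$ has an initial object. The whole point is that, by \Cref{def:int-completeness} and the definition of continuity, both completeness of $\B$ and continuity of $R$ are \emph{stable under pullback}, so the argument can be run one slice down, over $\sfrac{\E}{A_0}$, where the object of objects $A_0$ of $\A$ supplies a generic object. This is exactly where intrinsic smallness replaces the solution set condition of the classical theorem: the required initial object exists outright, with no smallness hypothesis to arrange by hand.

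First I would pass to the slice $\sfrac{\E}{A_0}$ and consider the generic object $a \colon \Terminal[\Cat[\sfrac{\E}{A_0}]] \to A_0^* \A$ (the point corresponding to $\Id[A_0]$), together with the pulled-back functor $A_0^* R \colon A_0^* \B \to A_0^* \A$. I then form the comma category $\mathbf{K}$ of $a$ and $A_0^* R$ as a lax pullback, exactly as $\Cns{D}$ and $\CoCns{D}$ were built; its points over an object of $\sfrac{\E}{A_0}$ are precisely the pairs consisting of an object of $\B$ and an arrow from the corresponding object of $\A$ into its image under $R$, so that $\mathbf{K}$ is the internal, $A_0$-indexed family of all the comma categories of $R$ under objects of $\A$ at once.

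The heart of the proof is to show that $\mathbf{K}$ is complete. Since $\A$ is complete, $A_0^* \B$ is complete, and since $R$ is continuous, $A_0^* R$ is continuous; the completeness of $\mathbf{K}$ then follows by an argument entirely parallel to \Cref{prop:cocone-complete}, with the continuous functor $A_0^* R \colon A_0^* \B \to A_0^* \A$ playing the role there played by $\Delta$ (limits of a diagram in $\mathbf{K}$ are created from $A_0^* \B$ along the projection $\mathbf{K} \to A_0^* \B$, the structure arrows being reassembled precisely because $A_0^* R$ preserves the limit cone). Granting this, the identity functor on $\mathbf{K}$ has a limit, which by \Cref{prop:initial-limit-identity} is an initial object of $\mathbf{K}$. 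Unwinding the global section realizing this initial object yields an arrow $L_0 \colon A_0 \to B_0$ together with a universal family $\eta \colon \Id[\A] \to R L_0$, which will be the object part of $L$ and the unit of the adjunction.

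It then remains to promote $L_0$ to an internal functor and to verify the adjunction. Working over $\sfrac{\E}{A_1}$ with the generic arrow $u \colon a \to a'$ of $\A$, the composite of $u$ with $\eta_{a'}$ presents an object of the comma category over $a$, and initiality of $(L_0(a), \eta_a)$ supplies a unique arrow $L_1(u) \colon L_0(a) \to L_0(a')$; uniqueness forces functoriality of $L$ and naturality of $\eta$, while the object-wise universal property of $\eta$ is exactly the statement that $L \Adjoint R$. I expect the main obstacle to be the completeness of $\mathbf{K}$: one must run the comma-category analogue of \Cref{prop:cocone-complete} in the slice $\sfrac{\E}{A_0}$ and check that the resulting initial object is genuinely uniform, rather than a fibrewise choice, which is guaranteed here because it is produced by the stable limit functor of \Cref{prop:int-limit-funct} and so automatically commutes with further reindexing.
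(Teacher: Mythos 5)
Your proposal is essentially correct, but it takes a genuinely different route from the paper. The paper constructs \(L\) globally and in one shot: it forms the full comma category \(\sfrac{\mathbf{A}}{R}\) as a lax pullback, takes the limit of the projection \((P_{\mathbf{A}}, P_{\mathbf{B}})\) over \(\mathbf{A}\) (the internal analogue of the pointwise right Kan extension formula \(L(a) = \lim(a \downarrow R \to \mathbf{B})\) performed for all \(a\) simultaneously), reads off \(L\) and the counit-type cone \(\hat{\pi}\) from that single limit, and then verifies the triangle identities by explicit, rather lengthy 2-categorical whiskering calculations. You instead internalize the classical universal-arrow proof: pass to \(\sfrac{\mathcal{E}}{A_0}\), form the comma category \(\mathbf{K}\) of the generic object under the reindexed \(R\), prove \(\mathbf{K}\) complete by the same creation-of-limits argument as \Cref{prop:cocone-complete} (with continuity of \(R\) replacing limit-preservation of \(\Delta\)), and extract an initial object via \Cref{prop:initial-limit-identity}. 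This reduction is sound --- both lemmas apply verbatim in the slice, and the stability demanded by \Cref{def:internal-limit} is exactly what makes the resulting initial object uniform rather than a fibrewise choice, as you correctly note --- and it has the virtue of reusing the same two-lemma pattern as the paper's proof that completeness implies cocompleteness, absorbing the triangle identities into the uniqueness clause of initiality instead of computing them by hand. What it costs is that the functoriality of \(L\) and the naturality of \(\eta\) must be rebuilt from the object part by working over the generic arrow in \(\sfrac{\mathcal{E}}{A_1}\) (and over \(A_1 \Pullback[\Source][\Target] A_1\) for composition), steps you only sketch; these are routine given stability but are genuine proof obligations, whereas the paper gets \(L\) as an internal functor for free from its global limit. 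Two small points to tighten: the phrase ``Since \(\mathbf{A}\) is complete, \(A_0^* \mathbf{B}\) is complete'' should of course read ``Since \(\mathbf{B}\) is complete,'' and you should state explicitly that the initial object of \(\mathbf{K}\) produced by \Cref{prop:initial-limit-identity} is initial in the internal (pullback-stable) sense, which holds because it is obtained as an internal limit of the identity and each reindexing of that limit is again such a limit.
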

\begin{proof}
We split the proof into smaller steps:
\begin{itemize}
	\item We shall define the functor \(L \colon \A \to \B\).
	
	Let \(\sfrac{\A}{R}\) be the comma category given by the lax pullback
	\[
		\begin{tikzcd}
			\sfrac{\A}{R} \ar[r, "{P_{\A}}"] \ar[d, "{P_{\B}}"']
			&\A \ar[d, equal] \ar[dl, Rightarrow, "{\alpha}"] \\
			\B \ar[r, "{R}"']
			&\A
		\end{tikzcd}.
	\]
	Consider the diagram
	\[
		\begin{tikzcd}
			\sfrac{\A}{R} \ar[rr, "{(P_{\A}, P_{\B})}"] \ar[dr, "{P_{\A}}"']
			&&\A \Prod \B \ar[dl, "{\pi_{1}}"] \\
			&\A
		\end{tikzcd}
	\]
	in \(\sfrac{\Cat[\E]}{\A}\).
	By completeness of \(\B\), the diagram must have a limit
	\[
		\begin{tikzcd}
			\A \ar[rr, "{\lim_{(P_{\A}, P_{\B})}}"] \ar[dr, equal]
			&&\A \Prod \B \ar[dl, "{\pi_{1}}"] \\
			&\A
		\end{tikzcd}
	\]
	with universal cone \(\pi \colon \Delta \lim_{(P_{\A}, P_{\B})} \to \Name{(P_{\A}, P_{\B})}\),
	where the situation is as shown in the following diagram.
	\[
		\begin{tikzcd}[column sep = large, row sep = huge]
			\A
				\ar[rr, bend left = 15, "{\Delta \lim_{(P_{\A}, P_{\B})}}" description]
				\ar[rr, bend right = 15, "{\Name{(P_{\A}, P_{\B})}}"' description]
				\ar[dr, equal]
			&&\PowObj{\sfrac{\A}{R}}{\A \Prod \B}_{\A} \ar[dl, "{\PowObj{P_{\A}}{\pi_{1}}_{\A}}"] \\
			&\A
		\end{tikzcd}
	\]
	The universal cone \(\pi\) corresponds, by currying, to a natural transformation
	\(\tilde{\pi} \colon \lim_{(P_{\A}, P_{\B})} !_{\sfrac{\A}{R}} \to (P_{\A}, P_{\B})\),
	where the situation is as shown in the following diagram.
	\[
		\begin{tikzcd}[column sep = large, row sep = huge]
			\sfrac{\A}{R}
				\ar[rr, bend left = 15, "{\lim_{(P_{\A}, P_{\B})} !_{\sfrac{\A}{R}}}"]
				\ar[rr, bend right = 15, "{(P_{\A}, P_{\B})}"']
				\ar[dr, "{P_{\A}}"']
			&& \A \Prod \B
				\ar[dl, "{\pi_{1}}"] \\
			&\A
		\end{tikzcd}
	\]
	It follows from the above diagrams that there exist a functor \(L \colon \A \to \B\) such that \(\lim_{(P_{\A}, P_{\B})} = (\Id[\A], L)\),
	and a natural transformation \(\hat{\pi} \colon L P_{\A} \to P_{\B}\) such that \(\tilde{\pi} = (\Id[P_{\A}], \hat{\pi})\).

	\item We shall define the natural transformation \(\eta \colon \Id[\A] \to R L\).
	
	Since \(R\) is continuous, \((\Id[\A] \Prod R) \lim_{(P_{\A}, P_{\B})}\) is a limit for \((P_{\A}, R P_{\B})\).
	Notice that \((\Id[\A] \Prod R) \lim_{(P_{\A}, P_{\B})} = (\Id[\A] \Prod R) (\Id[\A], L) = (\Id[\A], RL)\),
	as shown in the following diagram.
	\[
		\begin{tikzcd}[column sep = huge]
			\A \ar[r, "{(\Id[\A], L)}"] \ar[rr, bend left, "{(\Id[\A], RL)}"] \ar[dr, equal]
			&\A \Prod \B \ar[d, "{\pi_{1}}"] \ar[r, "{\Id[\A] \Prod R}"]
			&\A \Prod \A \ar[dl, "{\pi_{1}}"] \\
			&\A
		\end{tikzcd}
	\]
	Notice that \(\PowObj{\sfrac{\A}{R}}{(\Id[\A] \Prod R)} \Delta (\Id[\A], L) = \Delta (\Id[\A], RL)\)
	and that \(\PowObj{\sfrac{\A}{R}}{(\Id[\A] \Prod R)} \Name{(P_{\A}, P_{\B})} = \Name{(P_{\A}, R P_{\B})}\).
	Then, the universal cone for the limit is
	\[
		\PowObj{\sfrac{\A}{R}}{(\Id[\A] \Prod R)} \pi
		\colon \Delta (\Id[\A] \Prod RL) \to \Name{(P_{\A}, R P_{\B})}.
	\]
	The situation is shown in the diagram
	\[
		\begin{tikzcd}[column sep = large, row sep = huge]
			\A
				\ar[rr, bend left = 15, "{\Delta (\Id[\A], RL)}" description]
				\ar[rr, bend right = 15, "{\Name{(P_{\A}, R P_{\B})}}" description]
				\ar[dr, equal]
			&&\PowObj{\sfrac{\A}{R}}{\A \Prod \A}_{\A} \ar[dl, "{\PowObj{P_{\A}}{\pi_{1}}_{\A}}"] \\
			&\A
		\end{tikzcd}
	\]
	where \(\PowObj{P_{\A}}{\pi_{1}}_{\A} \colon \PowObj{\sfrac{\A}{R}}{\A \Prod \A}_{\A} \to \A\) is the exponential of \(P_{\A}\) and \(\pi_{1}\) in \(\sfrac{\Cat[\E]}{\A}\).

	Consider now the cone given by the functor
	\[
		\begin{tikzcd}[column sep = huge]
			\A \ar[rr, "{\Delta_{\A}}"] \ar[dr, equal]
			&&\A \Prod \A \ar[dl, "{\pi_{1}}"] \\
			&\A
		\end{tikzcd}
	\]
	and the natural transformation \(\hat{\alpha} \colon \Delta \Delta_{\A} \to \Name{(P_{\A}, R P_{\B})}\) obtained by uncurrying the natural transformation
	\((\Id[P_{\A}], \alpha) \colon \Delta_{\A} P_{\A} \to (P_{\A}, R P_{\B})\).
	The situation is as shown in the following diagrams.
	\[
		\begin{tikzcd}[column sep = large, row sep = huge]
			\sfrac{\A}{R}
				\ar[rr, bend left = 15, "{\Delta_{\A} P_{\A}}"]
				\ar[rr, bend right = 15, "{(P_{\A}, R P_{\B})}"']
				\ar[dr, "{P_{\A}}"']
			&& \A \Prod \A \ar[dl, "{\pi_{1}}"] \\
			&\A
		\end{tikzcd}
		\begin{tikzcd}[column sep = large, row sep = huge]
			\A
				\ar[rr, bend left = 15, "{\Delta \Delta_{\A}}" description]
				\ar[rr, bend right = 15, "{\Name{(P_{\A}, R P_{\B})}}" description]
				\ar[dr, equal]
			&&\PowObj{\sfrac{\A}{R}}{\A \Prod \A}_{\A} \ar[dl, "{\PowObj{P_{\A}}{\pi_{1}}_{\A}}"] \\
			&\A
		\end{tikzcd}
	\]

	Then, there exists a unique \((\Id[\A], \eta) \colon (\Id[\A], \Id[\A]) \to (\Id[\A], RL)\) such that
	\(\hat{\alpha} = (\Id[\A], \eta) \PowObj{\sfrac{\A}{R}}{(\Id[\A] \Prod R)} \pi\),
	i.e., there exists a unique \(\eta \colon \Id[\A] \to RL\) such that the following equation holds.
	\begin{equation}\label{eq:univ-prop-eta}
		\alpha = (R \hat{\pi}) (\eta P_{\A})
	\end{equation}
	
	\item We shall define the natural transformation \(\epsilon \colon L R \to \Id[\B]\).
	
	Consider the natural transformation \(\Id[R] \colon R \to R\) in the diagram
	\[
		\begin{tikzcd}
			\B
				\ar[ddr, bend right, equal]
				\ar[drr, bend left, "{R}"] \\
			&\sfrac{\A}{R} \ar[r, "{P_{\A}}"] \ar[d, "{P_{\B}}"']
			&\A \ar[d, equal] \ar[dl, Rightarrow, "{\alpha}"] \\
			&\B \ar[r, "{R}"']
			&\A
		\end{tikzcd}.
	\]
	Then, by the defining property of the lax pullback,
	there exists \(I \colon \B \to \sfrac{\A}{R}\)
	and isomorphisms \(\beta_{\A} \colon P_{\A} I \Iso R\)
	and \(\beta_{\B} \colon P_{\B} I \Iso \Id[\B]\)
	such that the following equation holds.
	\begin{equation}\label{eq:def-prop-I}
		(R \beta_{\B}) (\alpha I) \beta_{\A}^{-1} = \Id[R]
	\end{equation}
	
	Let let \(\epsilon\) be defined as
	\begin{equation}\label{eq:def-epsilon}
		\epsilon = \beta_{\B} (\hat{\pi} I) (L \beta_{\A}^{-1}). %
	\end{equation}
	
	\item We shall prove the first triangular identity, i.e., that \((\epsilon L)(L \eta) = \Id[L]\).
	
	We shall show that \(\big( \Id[\Id[\A]], (\epsilon L)(L \eta) \big)\)
	is a morphism of cones \(\pi \to \pi\),
	so that the result will follow by uniqueness of the universal property.
	That amounts to show that
	\(
		\pi \Delta \big( \Id[\Id[\A]], (\epsilon L)(L \eta) \big) = \pi
	\)
	or, equivalently, that
	\begin{equation}\label{eq:first-triang-id}
		\hat{\pi} (\epsilon L P_{\A})(L \eta P_{\A}) = \hat{\pi}.
	\end{equation}

	Consider the natural transformation \(R \hat{\pi} \colon R L P_{\A} \to R P_{\B}\) in the diagram
	\[
		\begin{tikzcd}
			\sfrac{\A}{R} \ar[r, "{P_{\A}}"] \ar[ddr, bend right, "{P_{\B}}"']
			&\A \ar[r, "{L}"]
			&\B \ar[d, "{R}"] \\
			&\sfrac{\A}{R} \ar[r, "{P_{\A}}"] \ar[d, "{P_{\B}}"']
			&\A \ar[d, equal] \ar[dl, Rightarrow, "{\alpha}"] \\
			&\B \ar[r, "{R}"']
			&\A
		\end{tikzcd}.
	\]
	Then, by the defining property of the lax pullback,
	there exists an arrow \(N \colon \sfrac{\A}{R} \to \sfrac{\A}{R}\)
	and isomorphisms \(\beta'_{\A} \colon P_{\A} N \Iso RLP_{\A}\)
	and \(\beta'_{\B} \colon P_{\B} N \Iso P_{\B}\)
	such that
	\begin{equation}\label{eq:def-prop-N}
		R \hat{\pi} = (R \beta'_{\B}) (\alpha N) (\beta'_{\A})^{-1}.
	\end{equation}
	
	Consider the functor \(I L P_{\A} \colon \sfrac{\A}{R} \to \sfrac{\A}{R}\) and
	the natural transformations \((\beta'_{\A})^{-1} (\beta_{\A} L P_{\A}) \colon P_{\A} I L P_{\A} \to P_{\A} N\)
	and \((\beta'_{\B})^{-1} \hat{\pi} (\beta_{\B} L P_{\A}) \colon P_{\B} I L P_{\A} \to P_{\B} N\).
	Then, by the 2-universal property of the lax pullback, there is a (unique) natural transformation \(\omega \colon I L P_{\A} \to N\) such that
	\begin{align}
		P_{\A} \omega &= (\beta'_{\A})^{-1} (\beta_{\A} L P_{\A}) \label{eq:PA-omega} \\
		P_{\B} \omega &= (\beta'_{\B})^{-1} \hat{\pi} (\beta_{\B} L P_{\A}) \label{eq:PB-omega}
	\end{align}
	since
	\[
	\begin{split}
		\MoveEqLeft (R ((\beta'_{\B})^{-1} \hat{\pi} (\beta_{\B} L P_{\A}))) (\alpha I L P_{\A}) \\
		&= (R\beta'_{\B})^{-1} (R \hat{\pi}) (R \beta_{\B} L P_{\A}) (\alpha I L P_{\A}) \\
	\text{(by \Cref{eq:def-prop-N})}
		&= (\alpha N) (\beta'_{\A})^{-1} (R \beta_{\B} L P_{\A}) (\alpha I L P_{\A}) \\
		&= (\alpha N) (\beta'_{\A})^{-1} (((R \beta_{\B}) (\alpha I)) L P_{\A}) \\
	\text{(by \Cref{eq:def-prop-I})}
		&= (\alpha N) (\beta'_{\A})^{-1} (\beta_{\A} L P_{\A}).
	\end{split}
	\]
	By exchange law, we have that \((P_{\B} \omega) (\hat{\pi} I L P_{\A}) = (\hat{\pi} N) (L P_{\A} \omega)\),
	i.e., by replacing \(P_{\A} \omega\) and \(P_{\B} \omega\), that
	\((\beta'_{\B})^{-1} \hat{\pi} (\beta_{\B} L P_{\A}) (\hat{\pi} I L P_{\A}) = (\hat{\pi} N) (L ((\beta'_{\A})^{-1} (\beta_{\A} L P_{\A})))\),
	which can be rewritten as
	\begin{equation}\label{eq:exch-omega}
		\hat{\pi} (\beta_{\B} L P_{\A}) (\hat{\pi} I L P_{\A}) = \beta'_{\B} (\hat{\pi} N) (L \beta'_{\A})^{-1} (L \beta_{\A} L P_{\A}).
	\end{equation}

	We can then proceed to the first part of the calculation to prove \Cref{eq:first-triang-id}.
	Observe that
	\[
	\begin{split}
		\MoveEqLeft \hat{\pi} (\epsilon L P_{\A}) \\
	\text{(by \Cref{eq:def-epsilon})}
		&= \hat{\pi} \big( (\beta_{\B} (\hat{\pi} I) (L \beta_{\A})^{-1}) L P_{\A} \big) \\
		&= \hat{\pi} (\beta_{\B} L P_{\A}) (\hat{\pi} I L P_{\A}) (L \beta_{\A} L P_{\A})^{-1} \\
	\text{(by \Cref{eq:exch-omega})}
		&= \beta'_{\B} (\hat{\pi} N) (L \beta'_{\A})^{-1} (L \beta_{\A} L P_{\A}) (L \beta_{\A} L P_{\A})^{-1} \\
		&= \beta'_{\B} (\hat{\pi} N) (L \beta'_{\A})^{-1}
	\end{split}
	\]
	from which it follows that
	\begin{equation}
		\hat{\pi} (\epsilon L P_{\A}) (L \eta  P_{\A})
		= \beta'_{\B} (\hat{\pi} N) (L \beta'_{\A})^{-1} (L \eta P_{\A}).
	\end{equation}
		
	Consider the natural transformation \(\eta \colon \Id[\A] \to R L\) in the diagram
	\[
		\begin{tikzcd}
			\A \ar[rrd, bend left, equal] \ar[ddr, bend right, "{L}"'] \\ %
			&\sfrac{\A}{R} \ar[r, "{P_{\A}}"] \ar[d, "{P_{\B}}"']
			&\A \ar[d, equal] \ar[dl, Rightarrow, "{\alpha}"] \\
			&\B \ar[r, "{R}"']
			&\A
		\end{tikzcd}.
	\]
	Then, by the defining property of the lax pullback,
	there exists an arrow \(E \colon \A \to \sfrac{\A}{R}\)
	and isomorphisms \(\beta''_{\A} \colon P_{\A} E \Iso \Id[\A]\)
	and \(\beta''_{\B} \colon P_{\B} E \Iso L\)
	such that
	\begin{equation}\label{eq:def-prop-E}
		\eta = (R \beta''_{\B}) (\alpha E) (\beta''_{\A})^{-1}.
	\end{equation}
	
	Consider the functor \(IL \colon \A \to \sfrac{\A}{R}\)
	and the natural transformations \((\beta_{\A} L)^{-1} \eta \beta''_{\A} \colon P_{\A} E \to P_{\A} I L\)
	and \((\beta_{\B} L)^{-1} \beta''_{\B} \colon P_{\B} E \to P_{\B} I L\).
	Than, by the 2-universal property of the lax pullback, there is a unique natural transformation \(\omega' \colon E \to I L\) such that
	\begin{align}
		P_{\A} \omega' &= (\beta_{\A} L)^{-1} \eta \beta''_{\A} \label{eq:PA-omega'}\\
		P_{\B} \omega' &= (\beta_{\B} L)^{-1} \beta''_{\B} \label{eq:PB-omega'}
	\end{align}
	since
	\[
	\begin{split}
		(R ((\beta_{\B} L)^{-1} \beta''_{\B})) (\alpha E)
		&= (R \beta_{\B} L)^{-1} (R \beta''_{\B}) (\alpha E) \\
	\text{(by \Cref{eq:def-prop-E})}
		&= ((R \beta_{\B})^{-1} L) \eta \beta''_{\B} \\
	\text{(by \Cref{eq:def-prop-I})}
		&= (((\alpha I) \beta_{\A}^{-1}) L) \eta \beta''_{\B} \\
		&= (\alpha I L) (\beta_{\A} L)^{-1} \eta \beta''_{\A}.
	\end{split}
	\]

	Then, we can resume the calculation to prove \Cref{eq:first-triang-id}:
	\[
	\begin{split}
		\MoveEqLeft \beta'_{\B} (\hat{\pi} N) (L ((\beta'_{\A})^{-1} (\eta P_{\A})) \\
	\text{(by \Cref{eq:PA-omega'})}
		&= \beta'_{\B} (\hat{\pi} N) \big( L \big( (\beta'_{\A})^{-1} (\beta_{\A} L P_{\A}) (P_{\A} \omega' P_{\A}) (\beta''_{\A} P_{\A})^{-1} \big) \big) \\
	\text{(by \Cref{eq:PA-omega})}
		&= \beta'_{\B} (\hat{\pi} N) (L ((P_{\A} \omega) (P_{\A} \omega' P_{\A}) (\beta''_{\A} P_{\A})^{-1} )) \\
		&= \beta'_{\B} (\hat{\pi} N) (LP_{\A} (\omega (\omega' P_{\A}))) (L \beta''_{\A} P_{\A})^{-1} \\
	\text{(by exchange law)}
		&= \beta'_{\B} (P_{\B} (\omega (\omega' P_{\A}))) (\hat{\pi} E P_{\A}) (L \beta''_{\A} P_{\A})^{-1} \\
		&= \beta'_{\B} (P_{\B} \omega) (P_{\B} \omega' P_{\A}) (\hat{\pi} E P_{\A}) (L \beta''_{\A} P_{\A})^{-1} \\
	\text{(by \Cref{eq:PB-omega})}
		&= \beta'_{\B} (\beta'_{\B})^{-1} \hat{\pi} (\beta_{\B} L P_{\A}) (P_{\B} \omega' P_{\A}) (\hat{\pi} E P_{\A}) (L \beta''_{\A} P_{\A})^{-1} \\
	\text{(by \Cref{eq:PB-omega'})}
		&= \hat{\pi} (\beta_{\B} L P_{\A}) (((\beta_{\B} L)^{-1} \beta''_{\B}) P_{\A}) (\hat{\pi} E P_{\A}) (L \beta''_{\A} P_{\A})^{-1} \\
		&= \hat{\pi} (\beta_{\B} L P_{\A}) (\beta_{\B} L P_{\A})^{-1} (\beta''_{\B} P_{\A}) (\hat{\pi} E P_{\A}) (L \beta''_{\A} P_{\A})^{-1} \\
		&= \hat{\pi} (\beta''_{\B} P_{\A}) (\hat{\pi} E P_{\A}) (L \beta''_{\A} P_{\A})^{-1}
	\end{split}
	\]
	
	Consider the functors \(E P_{\A} \colon \sfrac{\A}{R} \to \sfrac{\A}{R}\) and \(\Id[\sfrac{\A}{R}]\)
	and the natural transformations \(\beta''_{\A} P_{\A} \colon P_{\A} E P_{\A} \to P_{\A}\)
	and \(\hat{\pi} (\beta''_{\B} P_{\A}) \colon P_{\B} E P_{\A} \to P_{\B}\).
	Then, by the 2-universal property of the lax pullback, there is a unique natural transformation \(\omega'' \colon E P_{\A} \to \Id[\sfrac{\A}{R}]\) such that
	\begin{align}
		P_{\A} \omega'' &= \beta''_{\A} P_{\A} \label{PA-omega''} \\
		P_{\B} \omega'' &= \hat{\pi} (\beta''_{\B} P_{\A}) \label{PB-omega''}
	\end{align}
	since
	\[
	\begin{split}
		(R (\hat{\pi} (\beta''_{\B} P_{\A}))) (\alpha E P_{\A})
		&= (R \hat{\pi}) (R \beta''_{\B} P_{\A}) (\alpha E P_{\A}) \\
		&= (R \hat{\pi}) (((R \beta''_{\B}) (\alpha E)) P_{\A}) \\
	\text{(by \Cref{eq:def-prop-E})}
		&= (R \hat{\pi}) ((\eta \beta_{\A}'') P_{\A}) \\
		&= (R \hat{\pi}) (\eta P_{\A}) (\beta_{\A}'' P_{\A}) \\
	\text{(by \Cref{eq:univ-prop-eta})}
		&= \alpha (\beta''_{\A} P_{\A}).
	\end{split}
	\]
	
	Then, we can conclude the calculation to prove \Cref{eq:first-triang-id}:
	\[
	\begin{split}
		\MoveEqLeft \hat{\pi} (\beta''_{\B} P_{\A}) (\hat{\pi} E P_{\A}) (L \beta''_{\A} P_{\A})^{-1} \\
	\text{(by \Cref{PB-omega''})}
		&= (P_{\B} \omega'') (\hat{\pi} E P_{\A}) (L \beta''_{\A} P_{\A})^{-1} \\
	\text{(by exchange law)}
		&= \hat{\pi} (L P_{\A} \omega'') (L \beta''_{\A} P_{\A})^{-1} \\
	\text{(by \Cref{PA-omega''})}
		&= \hat{\pi} (L \beta''_{\A} P_{\A}) (L \beta''_{\A} P_{\A})^{-1} \\
		&= \hat{\pi}.
	\end{split}
	\]
	
	\item Finally, we shall prove the second triangular identity: %
	\[
	\begin{split}
		\MoveEqLeft (R \epsilon)(\eta R) \\
	\text{(by \Cref{eq:def-epsilon})}
		&= (R (\beta_{\B} (\hat{\pi} I) (L \beta_{\A}^{-1}))) (\eta R) \\
		&= (R \beta_{\B}) (R \hat{\pi} I) (R L \beta_{\A}^{-1}) (\eta R) \\
	\text{(by exchange law)}
		&= (R \beta_{\B}) (R \hat{\pi} I) (\eta P_{\A} I) \beta_{\A}^{-1} \\
		&= (R \beta_{\B}) (((R \hat{\pi}) (\eta P_{\A})) I ) \beta_{\A}^{-1} \\
	\text{(by \Cref{eq:univ-prop-eta})}
		&= (R \beta_{\B}) (\alpha I) \beta_{\A}^{-1} \\
	\text{(by \Cref{eq:def-prop-I})}
		&= \Id[R].
	\end{split}
	\]
\end{itemize}
Thus, the functor \(L \colon \A \to \B\) so defined is the left adjoint of \(R\).
\end{proof}
}

\printbibliography
\end{document}